 \def\newblock{\ }%
\begin{document}


\RUNAUTHOR{Chorobura, Necoara and Pesquet}

\RUNTITLE{An adaptive forward-backward-forward splitting algorithm for solving pseudo-monotone inclusions }

\TITLE{An adaptive forward-backward-forward splitting algorithm for solving pseudo-monotone inclusions }

\ARTICLEAUTHORS{%
\AUTHOR{Flavia Chorobura}
\AFF{Automatic Control and Systems
	Engineering Department, National University of Science and Technology  Politehnica Bucharest, 060042
	Bucharest, Romania and Federal Institute of Education, Science and Technology of Paraná, campus Foz do Iguaçu, Brazil, \EMAIL{flavia.chorobura@ifpr.edu.br}}

\AUTHOR{Ion Necoara}
\AFF{Automatic Control and Systems
	Engineering Department, National University of Science and Technology Politehnica Bucharest, 060042
	Bucharest  and  Gheorghe Mihoc-Caius Iacob  Institute of Mathematical Statistics and Applied Mathematics of the Romanian Academy, 050711 Bucharest, Romania,   \EMAIL{ion.necoara@upb.ro}}

\AUTHOR{Jean-Christophe Pesquet}
\AFF{University Paris-Saclay, CentraleSup\'elec, CVN, Inria, Gif-sur-Yvette, France, \EMAIL{jean-christophe.pesquet@centralesupelec.fr}}
} 

\ABSTRACT{%
In this paper, we propose an adaptive forward-backward-forward splitting algorithm for finding a zero of a pseudo-monotone operator which is split as a sum of three operators: the first  is continuous single-valued, the second  is Lipschitzian, and the third is maximally monotone. This setting  covers, in particular,  constrained minimization scenarios, such as  problems having  smooth and convex functional constraints (e.g., quadratically constrained quadratic programs) or problems with a pseudo-convex objective function minimized over a simple closed convex  set (e.g., quadratic over linear fractional programs).  For the general problem, we design a forward-backward-forward splitting type method based on  novel adaptive stepsize strategies. Under an additional generalized Lipschitz property of the first operator,  sublinear convergence rate is derived for the sequence generated by our adaptive algorithm. Moreover, if the sum is uniformly pseudo-monotone, linear/sublinear rates are derived depending on the parameter of uniform pseudo-monotonicity. 
Preliminary numerical experiments  demonstrate the good performance of our method when compared to some existing optimization methods and software.
}%

\FUNDING{The research leading to these results has received funding from: ITN-ETN project TraDE-OPT funded by the European Union’s Horizon 2020 Research and Innovation Programme under the Marie Skolodowska-Curie grant agreement No. 861137; UEFISCDI, Romania, PN-III-P4-PCE-2021-0720, under project L2O-MOC, nr. 70/2022.}



\KEYWORDS{Pseudo-monotone operators, forward-backward-forward splitting, adaptive stepsize,  convergence analysis, nonconvex optimization.} 

\maketitle


\section{Introduction}\label{sec:Intro}
\noindent Let $\mathbb{H}$ be a finite-dimensional real vector space  endowed with a scalar product $\langle \cdot, \cdot\rangle$ and the corresponding norm  $\|\cdot\|$. Our goal is to find a zero of a sum of three operators $A\colon \mathbb{H}\to \mathbb{H}$, $B\colon \mathbb{H}\to \mathbb{H}$, and $C\colon \mathbb{H}\ \to 2^{\mathbb{H}}$, that is
 \vspace{-0.3cm}	
 \begin{align}
    \label{prob}
    \text{find}\;\bar{z} \in \mathbb{H}\;\text{such that}\;\;
		0 \in A\bar{z} + B\bar{z} + C\bar{z},
   \vspace{-0.7cm}
	\end{align}

\vspace{-0.5cm}
\noindent where  $A+B+C$ is pseudo-monotone, and $A$ satisfies some generalized smoothness condition,  $B$ is smooth, and $C$ is maximally monotone  as detailed in Assumption \ref{ass1} of Section \ref{se:assum}. Finding a zero of a sum of operators is a very general problem and covers, in particular, constrained optimization, and  minimax optimization problems frequently encountered in  signal processing \cite{HuaPal:14},  triangulation in computer vision \cite{AhoAgaTho:12}, semi-supervised learning \cite{CheLi:21}, learning of kernel matrices \cite{Lan:02},  steering direction estimation for RADAR detection \cite{Mai:11}, generative adversarial networks \cite{MerPapPil:18}
among others. 

\medskip 

\noindent \textit{Previous work.} The problem of finding a zero of a sum of operators is considered in many works. For example, \cite{BriDav:18,BuiCom:22,Com:24,ComPes:12,DavWot:17,JiaVan:22,Tse:00} cover the monotone case, while \cite{AlaKimWri:24,CaiZhe:22,Pet:22} consider the nonmonotone case. In \cite{DavWot:17,JiaVan:22} all  three operators are assumed maximally monotone and, additionally,  the first is Lipschitz continuous. Under these settings,  algorithms based on resolvent
and forward operators, activated one at a time successively, are proposed together with a detailed convergence analysis. Furthermore, finding a zero of a sum of two maximally monotone operators,  $A+C$,  such that $A$ is a continuous single-valued operator, is investigated in \cite{Tse:00} and a forward-backward-forward algorithm is proposed (also known as Tseng's algorithm),  where the stepsize is chosen constant when $A$ is Lipschitz or based on an Armijo-Goldstein-type rule, otherwise. Linear rate was derived for this method when $A+C$ is strongly monotone. In \cite{AlaKimWri:24,CaiZhe:22,Pet:22},  $A$ is assumed Lipschitz continuous, possible nonmonotone, and $C$ is maximally monotone, such that either $A+C$ satisfies the weak Minty condition or  a cohypomonotonicity assumption.  In particular, \cite{Pet:22} considers an extragradient algorithm  with adaptive and constant stepsizes, which reduces, for a specific choice of stepsize, to the forward-backward-forward algorithm in the monotone case. Moreover, \cite{CaiZhe:22} analyzes an optimistic gradient algorithm,  while in \cite{AlaKimWri:24} algorithms  based on classical Halpern and  Krasnosel’skii-Mann iterations are analyzed. For all these methods, under suitable assumptions, sublinear rates are derived. Finally, finding a zero of a sum of three operators $A+B+C$  is considered in \cite{ComPes:12}, where $A, C$ are maximally monotone and $B$ is Lipschitz continuous and monotone, and asymptotic convergence is proved  for an error-tolerant forward-backward-forward~algorithm. 

\vspace{0.1cm}

\noindent The forward-backward-forward algorithm was also extended to solve variational inequalities. For example, \cite{BotCseVuo:20,Ton:23} consider a variational inequality, where the operator is Lipschitz continuous, and a (modified) Tseng algorithm is employed with a constant stepsize or an adaptive stepsize, so that it is not necessary to know the Lipschitz constant. Convergence is derived when the operator is pseudo-monotone. Moreover, the Lipschitz assumption on the operator involved in the variational inequality is relaxed in \cite{ThoVuo:19}, the operator being assumed continuous. Then, Tseng's algorithm is considered with an Armijo-Goldestein rule for the stepsize. Under standard conditions, the weak and strong convergence of the  method is obtained in the pseudo-monotone case. Our approach differs from \cite{ThoVuo:19}, as we consider that the operator $A$ satisfies a relaxed Lipschitz condition and we employ Tseng's algorithm with  novel adaptive  stepsize rules (e.g., based on the positive root of a polynomial equation). Others methods  for solving variational inequalities with a Lipschitz operator in the monotone case were considered e.g., in  \cite{Nem:04} and in the nonmonotone case (under weak Minty condition)~in~\cite{DiaDasJor:21}. 

\vspace{0.1cm}

\noindent Furthermore, specific algorithms were also developed for particular classes of   variational inequalities, such as convex-concave  minimax optimization problems  \cite{ChaPoc:11,Con:13}. More specifically, these papers address problems of the form: 
 \vspace{-0.1cm}
\begin{equation}
\label{prob:3}
\min_{x \in \mathcal{X}} \max_{y\in \mathcal{Y}} \; \langle Lx,y \rangle + \varphi(x) - \psi(y), 
 \vspace{-0.1cm}
\end{equation} 
where $\mathcal{X}$ and $\mathcal{Y}$ are Hilbert spaces, $L$ is a linear operator and 
$\varphi: \mathcal{X} \to \bar{\mathbb{R}} := \mathbb{R} \cup \{+\infty\}$ and $\psi: \mathcal{Y} \to \bar{\mathbb{R}} $ are proper, convex, lower semicontinuous functions. For such problems, a primal-dual proximal algorithm is proposed in \cite{ChaPoc:11}  for which a sublinear rate is derived in the optimality measure:
 \vspace{-0.3cm}
\begin{align*}
 G(\bar{x},\bar{y}) = 
\max_{y \in \mathcal{Y}} \langle L\bar{x}, y \rangle - \psi(y) + \varphi(\bar{x}) - \min_{x\in \mathcal{X}} \langle Lx, \bar{y} \rangle + \varphi(x) -\psi(\bar{y}),
 \vspace{-0.3cm}
\end{align*}

\vspace{-0.15cm}

\noindent for a given $(\bar{x},\bar{y})\in \mathcal{X} \times \mathcal{Y}$. An extension of the algorithm from \cite{ChaPoc:11} is given  in \cite{Con:13}, where $\varphi$ is split as  $\varphi_1 + \varphi_2$, with $\varphi_{1}: \mathcal{X} \to \mathbb{R}$ convex, differentiable with a  Lipschitz continuous gradient, while $\varphi_{2}$ is a  proper, convex, lower semicontinuous function.  It is proved that this algorithm converges weakly to a solution to problem \eqref{prob:3} and,  if  $\varphi_1=0$, then  \cite{Con:13} recovers the primal–dual algorithm~in~\cite{ChaPoc:11}.

\medskip
 
\noindent \textit{Contributions.} In this paper, we propose a method for finding a zero of a sum of three operators, which are not necessarily monotone.  For this general problem we design a forward-backward-forward splitting type method based on  novel adaptive stepsize strategies and then perform a detailed convergence analysis. More specifically, our main contributions are the following.

\vspace{0.1cm}

\noindent (i) \textit{Mathematical modelling:} We consider the general problem \eqref{prob} of finding a zero of a sum of three operators, $A+B+C$, such that $A$ is continuous, $B$ is Lipschitz continuous, and $C$ is maximally monotone. In contrast to other works that assume $A$ to be Lipschitz continuous and the sum to be monotone, we relax these conditions, i.e., we require  the operator $A$ to satisfy a generalized Lipschitz condition and the sum to be pseudo-monotone. Our assumptions cover important classes of optimization problems such as  problems minimizing  smooth and convex functional constraints (e.g., quadratically constrained quadratic programs) or problems minimizing pseudo-convex objective functions over a simple closed convex  set (e.g., quadratic over linear fractional programs).

\vspace{0.1cm}

\noindent (ii) \textit{Algorithm:} For solving this general problem we propose  a variant of  the forward-backward-forward algorithm \cite{Tse:00}, based on \textit{two novel adaptive stepsize strategies}.  In contrast to previous works where computationally expensive Armijo-Goldestein stepsize rules are used when the operator is continuous, we propose two \textit{adaptive} stepsize strategies that require finding the root of a certain nonlinear equation whose coefficients depend on the current iterate and on the parameters characterizing the operator properties. In particular, for quadratically constrained quadratic (resp. quadratic over linear fractional) programs the stepsize is computed solving a second-order (resp. third-order) polynomial equation.  

\vspace{0.1cm}

\noindent (iii) \textit{Convergence analysis:} Within the considered settings, we provide a detailed convergence analysis for the  forward-backward-forward algorithm based on our adaptive stepsize rules.    In particular, when the sum of the operators is  pseudo-monotone, we prove the global  asymptotic convergence for the whole sequence generated by the algorithm and,  additionally, establish sublinear convergence rate.  An improved linear rate is obtained when the sum  is uniformly pseudo-monotone of order  $q\in [1,2]$. 

\vspace{0.1cm}

\noindent (iv) \textit{Experiments:} Detailed numerical experiments using synthetic and real data  demonstrate the effectiveness of our method and allows us to evaluate its performance when compared to some existing state-of-the-art optimization methods from \cite{Tse:00,ThoVuo:19},  and existing software \cite{Gurobi}.\\
In conclusion, enhancing a forward-backward-forward splitting algorithm by introducing novel adaptive stepsize strategies and considering suitable assumptions on the generalized problem (covering a wide spectrum of applications) to allow the derivation of a complete convergence analysis, represent important contributions for solving inclusion problems.
\vspace{-0.2cm}

\section{Background}
\noindent We denote by $\text{zer}(A)$ the set of zeros of an operator $A$,  by  $\Gamma_{0}(\mathbb{H})$  the set of proper lower semicontinuous convex functions on $\mathbb{H}$ with values in $]-\infty,+\infty]$, by $\mathbf{0}_{m\times m}$ the $m\times m$ null matrix and by $\mathbf{0}_{m}$ the $m$-dimensional null vector. Further, let us recall the definition of the subdifferential of a convex function.
    \begin{definition}
        The subdifferential of a proper convex function $f: \mathbb{H} \to \bar{\mathbb{R}}$ is the set-valued operator $\partial f: \mathbb{H}  \to 2^{\mathbb{H}}$ which maps every point $x\in \mathbb{H} $ to the set
        \vspace{-0.3cm}
        \begin{align*}
            \partial f(x) = \{u \in \mathbb{H}  \,|\, (\forall y \in \mathbb{H} )  \,\  \langle y-x, u \rangle + f(x) \leq f(y)  \}.
            \vspace{-0.5cm}
        \end{align*}
    \end{definition}

    \vspace{-0.5cm}
    \noindent Note that $\partial f(x) = \varnothing$ for $x \not \in \text{dom}f$.  For example, let $D$ be a nonempty closed and convex subset of $\mathbb{H}$ and let its indicator function $\iota_{D}$ be defined as
    \begin{equation}\label{eq:indicator}
        \iota_{D}: \mathbb{H} \to \bar{\mathbb{R}}: \;\;  x \mapsto 
        \begin{cases}
        0  & \text{if} \; x\in D\\ 			 + \infty, &\; \text{otherwise.}
        \end{cases}
    \end{equation}
    \noindent Then,  $\partial \iota_{D} = \mathcal{N}_{D}$, where $\mathcal{N}_{D}$ is the normal cone  to $D$, i.e.
    \begin{equation}\label{eq:normalcone}
        \mathcal{N}_{D}(x) = 
        \begin{cases}
        \left\{ u \in \mathbb{H} \; | \;
        (\forall y \in D)\;\;
        \langle y-x, u \rangle \leq 0\right\} &  \text{if} \; x \in D    \\
        \varnothing & \text{otherwise.} 
        \end{cases}   
    \end{equation}

\noindent Moreover, if $f$ is differentiable at a point $x \in \text{dom} f$, its gradient is denoted by $\nabla f(x)$.  Let us also recall the definition of functions with  H\"older continuous gradient.
    \begin{definition}
    \label{def:holder}
    Let $\nu \in ]0,1]$. Then, the  differentiable function $g\colon \mathbb{H} \to \mathbb{R}$ has a $\nu$-H\"older continuous gradient,  if there exists  $L_g > 0$ such that
    \begin{equation}
        \label{eq:holder}
        (\forall (x,w) \in \mathbb{H}^2)\quad 
        \|\nabla g(x) - \nabla g(w) \| \leq L_g \|x-w\|^{\nu}.
    \end{equation}
\end{definition}
\noindent If $g$ has $\nu$-H\"older continuous gradient, then the following inequality holds, see \cite[Lemma 1]{Yas:16}: 
\vspace{-0.8cm}
\begin{align}
\label{eq:holderineq}
(\forall (x,w)\in \mathbb{H}^2)\quad 
|g(w) - g(x) - \langle \nabla g(x), w-x\rangle| \leq \dfrac{L_{g}}{1+\nu} \|w-x\|^{1+\nu}.
\vspace{-0.8cm}
\end{align}

\vspace{-0.5cm}
\noindent Next, we present the definitions of pseudo-convex functions and   operators.

\begin{definition}
Let  $\mathcal{X} \subseteq \mathbb{H}$ be an open set,  let $f \colon \mathcal{X} \to \mathbb{R}$ be a differentiable function and let $\mathcal{Z}$ be a subset of $\mathcal{X}$. Then, $f$ is said to be pseudo-convex on ${\mathcal{Z}}$ if, for every $(x,w)\in \mathcal{Z}^2$, one has:
     \vspace{-0.2cm}
    \begin{equation*}
     \langle \nabla f (x), w - x \rangle \geq 0 \,\ \Longrightarrow  \,\ f(x) \leq f(w).
      \vspace{-0.2cm}
    \end{equation*}
\end{definition}

\noindent Clearly, any convex function is pseudo-convex and any stationary point of a pseudo-convex function is a global minimum. However, there are also pseudo-convex functions that are not convex. For example,   consider an open convex set $\mathcal{X} \subset \mathbb{R}^{n}$ and differentiable functions $g: \mathcal{X} \to [0,+\infty[$ and $h\colon \mathcal{X} \to ]0,+\infty[$ such that $g$ is convex and $h$ is concave. Then, the function $f \colon \mathcal{X}\to ]0,+\infty[\colon x \mapsto g(x)/h(x)$, is pseudo-convex on any subset of $\mathcal{X}$ \cite{BorLew:06}. Other examples of pseudo-convex functions are given in Example \ref{ex:frac} below, see also \cite{Man:65}. The notion of pseudo-convexity was also extended to nondifferentiable functions, see for example \cite{Aus:98}.  

\begin{definition}
\label{def:pseudo-mon}
An operator $T\colon \mathbb{H}\to 2^{\mathbb{H}}$ is said to be pseudo-monotone if     
 \vspace{-0.5cm}
\begin{align*}
    (\forall (x,y) \in \mathbb{H}^2)\;
    (\exists \hat{x} \in Tx)\;\; \langle \hat x, y-x \rangle \geq 0 \quad \Longrightarrow \quad
(\forall  \hat{y} \in Ty)\;\;
\langle \hat y, y-x \rangle \geq 0.
 \vspace{-0.7cm}
    \end{align*}
    \end{definition}   
    \vspace{-0.5cm}
 For example, \cite{KarSch:90} shows  that any differentiable pseudo-convex function has a pseudo-monotone gradient. In addition, \cite{Aus:98} proves that a lower semicontinuous radially continuous function is pseudo-convex if and only if its    subdifferential is pseudo-monotone. Moreover, note that every monotone operator is pseudo-monotone

\medskip 

\noindent Finally, let us recall the definition of the resolvent of an operator $C: \mathbb{H} \to 2^{\mathbb{H}}$. The resolvent of $C$ is the operator $J_{C} = (\operatorname{Id} + C)^{-1}$, that is
 \vspace{-0.1cm}
   \begin{equation*}
       (\forall(x,p) \in  \mathbb{H}^2) \quad p \in J_{C}x \;\; \iff \;\; x-p \in Cp.
    \vspace{-0.1cm}
   \end{equation*}
\noindent If $C: \mathbb{H} \to 2^{\mathbb{H}}$ is maximally monotone, then $J_{C}$ is single-valued, defined everywhere on $\mathbb{H}$, and firmly nonexpansive \cite{BauCom:11}. Moreover, if $C = \partial f$, the subdifferential operator of a convex function $f$, then its resolvent is the proximal mapping $\text{prox}_{f}$. If $f = \iota_{D}$, where $\iota_{D}$ is defined in \eqref{eq:indicator} and $D$ is a nonempty closed convex subset of $\mathbb{H}$, then  $\text{prox}_{\gamma \iota_{D}} = \text{proj}_{D}$, where $\text{proj}_{D}$ is the projection operator onto  the set $D$.


\section{Assumptions and examples}
\label{se:assum}
In this section we provide  our main assumptions and also several examples of problems that fit into our framework. First, let us 
present  our standing assumptions for  the operators $A$, $B$,  and $C$.
	\begin{assumption}
		\label{ass1}
	The following assumptions hold for problem \eqref{prob}:
		\begin{enumerate}[i)]
              \item\label{ass1iii} $C$ has nonempty closed  convex domain, $\operatorname{dom} C$, and  is maximally monotone.
            \item\label{ass1i} A is a continuous single-valued operator on $\operatorname{dom} C \subseteq \mathbb{H}$.
            \item\label{ass1ii} $B$ is  a single-valued operator and Lipschitz on  $\operatorname{dom}C$  with a Lipschitz constant $L_B>0$ (when $B = 0$, we can take an arbitrarily small positive value for $L_B$). 
			\item\label{ass1iv} $A+B+C$ is a pseudo-monotone operator. 
			\item\label{ass1v} There exist $(\zeta,\tau) \in ]0,+\infty[^2$
             such that for every $(u,w) \in \mathbb{H}^2$, $\gamma \in ]0,+\infty[$,  $q = \operatorname{proj}_{\operatorname{dom} C}w$, and $z= q - \gamma u$, the following holds:
			\vspace*{-0.5cm}
            \begin{align}
				\label{ass:resC}
            \|q - J_{\gamma C} z\| \leq \gamma(\zeta \|u\| + \tau).
             \vspace*{-1.5cm}
			\end{align}  
            
            \vspace*{-0.5cm}
			\item\label{ass1vi} $A$ satisfies a generalized Lipschitz condition, that is, there exist $\mu  \in ]0,2]$, $(\beta,\theta) \in [2,+\infty[^2$ and continuous functions $a$, $b$, and $c$ from $\mathbb{H}$  to $[0,+\infty[$ such that,  for every $(z_{1},z_2)\in (\operatorname{dom} C)^2$, 
             \begin{equation}
            \label{eq:operatorA}
	           \|Az_{1} - Az_{2}\|^2 \leq a(z_{1}) \|z_{1}-z_{2}\|^{\mu} + b(z_{1}) \|z_{1}-z_{2}\|^{\theta} + c(z_{1}) \|z_{1}-z_{2}\|^{\beta}.
            \end{equation}
		\end{enumerate}
	\end{assumption}

\begin{remark}
Assumption \ref{ass1}.\ref{ass1vi} can be generalized to more than three terms, i.e., $A$ satisfies a generalized Lipschitz condition if there exist $\mu  \in ]0,+\infty[$, $(\theta_{i})_{1\le i \le m} \in [2,+\infty[^m$, and continuous functions $a$ and $(b_{i})_{1\le i \le m}$ from $\mathbb{H}$  to $[0,+\infty[$ such that 
             \begin{equation*}
            (\forall (z_{1},z_2)\in (\operatorname{dom} C)^2)\quad
	           \|Az_{1} - Az_{2}\|^2 \leq a(z_{1}) \|z_{1}-z_{2}\|^{\mu} + \sum_{i=1}^{m} b_{i}(z_{1}) \|z_{1}-z_{2}\|^{\theta_{i}}.
            \end{equation*}
Although the convergence analysis from this paper can be derived under this more general  condition,  Assumption \ref{ass1}.\ref{ass1vi} turns out to be sufficient in 
most of the applications of interest. Additionally, note that we require $a(\cdot), b(\cdot)$ and $c(\cdot)$ to depend only on $z_1$ in order to make our adaptive stepsize choices introduced in the following sections dependent only on the current  iterate.
\end{remark}

\noindent Note  that our assumptions are quite general.  Next, we present some important examples of optimization problems that can be recast as problem ~\eqref{prob}, showing the versatility of our settings. 

\begin{example}
\label{ex:3terms}
(Minimizing the sum of three functions). The most straightforward example of inclusion \eqref{prob} arises from the optimization problem:
\vspace{-1cm}
 
\begin{align} 
\label{eq:3terms}
\min_{x \in \mathbb{R}^{n}}  F(x) \; :=f(x) + g(x) + h_1(x) +h_2(Lx), 
\end{align}

\vspace{-0.5cm}  

\noindent where $L\in \mathbb{R}^{m\times n}$, $f\in \Gamma_{0} (\mathbb{R}^n)$ has a $\nu$-H\"older continuous gradient with constant $L_{f}>0$, $g\in \Gamma_{0} (\mathbb{R}^n)$  has a Lipschitz continuous gradient with constant $L_g > 0$, and  $h_2 \in \Gamma_{0} (\mathbb{R}^m)$ is finite at a point in the relative interior of the range of $L$.  Moreover, we assume that  $h_{1}$ is the indicator function of a nonempty closed convex set $D$ and $h_2$ is Lipschitz on its domain  with modulus $L_{h_2}>0$. We assume that $L(D) \subseteq \operatorname{dom} h_{2}$ and there is a point in the intersection of the relative interiors of $L(D)$ and $\operatorname{dom}h_{2}$. The latter condition ensures that $\partial (h_{1} + h_{2} \circ L)  = \partial h_{1} + L^\top \circ \partial h_2 \circ L$. This formulation covers composite ($f=0$), H\"older composite ($g=0$),  or hybrid   composite problems, respectively. The first-order optimality condition for \eqref{eq:3terms} at $\bar x$ reduces to $0 \in A\bar x + B \bar x + C \bar x$, where  $A=\nabla f$, $B=\nabla g$, and $C=\partial h_1 + L^\top \circ \partial h_2 \circ L$. Next, we show that Assumption \ref{ass1} holds for this example:

\noindent (i) - (iii) Note that, since $h_1 \in \Gamma_{0} (\mathbb{R}^n)$, $h_2 \in \Gamma_{0} (\mathbb{R}^m)$, the gradient of $f$ is continuous and g has Lipschitz continuous gradient, operator $C$ is maximally monotone, operator $A$ is a single-valued continuous operator, and operator $B$ is Lipschitz continuous with constant $L_{B} = L_g$.

\noindent (iv) Since the functions $f$ and $g$ are convex, then operators $A$ and $B$ are monotone. Moreover, since  $C$ is maximal monotone, we have that the operator $A+B+C$ is monotone, and consequently, pseudo-monotone. 

\noindent (v)  
 Let $(u,w)\in \mathbb{H}^2$,  $q = \text{proj}_{\text{dom} C}w$, and 
$z= q - \gamma u$. Then, we  have
         \vspace{-0.5cm}
        \begin{align*}
            \operatorname{dom}C = D \quad \text{and}\quad J_{\gamma C} =  \operatorname{prox}_{\gamma(h_{1}+h_{2}\circ L)}.
         \vspace{-0.2cm}
        \end{align*}

\vspace{-0.5cm}
\noindent In \cite{AdlBouCau:19} it was proved that
         \vspace{-0.5cm}
        \begin{align*}
            J_{\gamma C} = \text{proj}_{\text{dom}C }\circ \text{prox}^{h_{1}}_{\gamma h_{2} \circ L}, \; \text{where} \;
            \text{prox}^{h_{1}}_{\gamma h_{2} \circ L} = (\text{Id} + \gamma L^{\top} \circ  \partial h_{2} \circ L  \circ \text{proj}_{\text{dom} C})^{-1}.
             \vspace{-0.1cm}
        \end{align*}     

\vspace{-0.5cm}
\noindent Define $p=\text{prox}^{h_{1}}_{\gamma h_{2} \circ L}(z)$. Then, $p + \gamma \hat{p}=z$ for some $\hat{p} \in L^\top \circ \partial h_{2} \circ L  (\text{proj}_{\text{dom} C} p)$. Moreover, $\|\hat{p}\| \leq L_{h_2} \|L\|$  and 
         \vspace{-0.5cm}
        \begin{align*}
           &  \|\text{proj}_{\text{dom} C}w  - J_{\gamma C} z\| \leq \|\text{proj}_{\text{dom} C} w -  \text{prox}^{h_{1}}_{\gamma h_{2} \circ L}(z)\| = \|\text{proj}_{\text{dom} C} w - z + \gamma \hat{p}\| \\ &\leq \|\text{proj}_{\text{dom} C} w - z \| + \gamma\|\hat{p}\|  \leq \|\text{proj}_{\text{dom} C} w-z\| + \gamma L_{h_2} \|L\| = \gamma\|u\| + \gamma L_{h_{2}} \|L\|,
            \vspace{-0.1cm}
        \end{align*}

        \vspace{-0.5cm}
        \noindent where, in the first inequality, we have used the nonexpansiveness of the projection operator and $\text{proj}_{\text{dom} C}(\text{proj}_{\text{dom} C}(w)) = \text{proj}_{\text{dom} C}(w)$, and in the last one, we  have used the linear relation 
        between $z$, $\text{proj}_{\text{dom} C} w$, and $u$. Therefore, in this case,  $\zeta=1$ and $\tau = L_{h_{2}} \|L\|$.

\noindent (vi) From \eqref{eq:holder} and the definition of operator $A$, we have, for every $(z_{1},z_2)\in D^2$, 
\begin{equation*}
    \|Az_{1} - Az_{2}\|^2 \leq L_{f}^2\|z_{1}-z_{2}\|^{2\nu}
\end{equation*}
Hence, $a(z_1) = L_{f}^2$, $\mu=2\nu$, $b(z_{1})=0$, and $c(z_{1})=0$.

\end{example}    
    
\begin{example}(Minimax problems).  
\label{ex:minimax}
Let $m$ and $n$ be positive integers, and consider the following minimax problem:  
         \vspace{-0.1cm}
		\begin{equation}
			\min_{x \in \mathbb{R}^n} \max_{ y \in \mathbb{R}^m} 	F(x,y) + \varphi(x) - \psi(y), \label{eq:prob}
         \vspace{-0.1cm}
		\end{equation}
		where $F: \mathbb{R}^n \times \mathbb{R}^{m} \to \mathbb{R}$ is a differentiable function, $\psi = \psi_1 + \psi_2$,  and $\varphi = \varphi_1 + \varphi_2$, with $\psi_1$ and $\varphi_1$ having Lipschitz gradients, $\varphi_2 \in \Gamma_{0} (\mathbb{R}^n)$, and $ \psi_2 \in \Gamma_{0} (\mathbb{R}^{m})$. Note that the minimax problem \eqref{eq:prob} is more general than  problem \eqref{prob:3} considered in previous works, as we allow more general expressions for $F$ beyond bilinear terms.  The first-order optimality conditions for this problem are equivalent to solving the inclusion:
        
    \vspace{-0.9cm}
    
    \begin{align}
		0 \in A(\bar{x},\bar{y}) + B(\bar{x},\bar{y}) +  C(\bar{x},\bar{y}), \label{cond}
	\end{align}

    \vspace{-0.9cm}
    
	\noindent where $\mathbb{H} = \mathbb{R}^n \times \mathbb{R}^{m}$ and the  three  operators are
    
     \vspace{-1.1cm}
	
    \begin{align}
		&A: \mathbb{H} \to \mathbb{H}:\;  (x,y) \mapsto (\nabla_{x}F(x,y), - \nabla_{y}F(x,y))    \label{condbis}\\ 
		&B: \mathbb{H} \to \mathbb{H}: \; (x,y) \mapsto  (\nabla \varphi_1(x), \nabla \psi_1(y)), \nonumber\\
		&C: \mathbb{H} \to 2^{\mathbb{H}}: \;  (x,y) \mapsto  \partial \varphi_2(x) \times \partial \psi_2(y).\nonumber
	\end{align}

 \vspace{-0.5cm}

\noindent Next, we give two examples  of this minimax problem.

\noindent a) For simplicity, let us consider $m=1$ and
\begin{equation}\label{e:Fnm}
(\forall (x,y)\in \mathbb{H})\quad
F(x,y) =  y g(x), 
\end{equation}

 \vspace{-0.3cm}
 
\noindent where  $g$ is a twice differentiable convex function, which has a $\nu$-H\"older continuous gradient with constant $L_{g}$,  $\varphi_{1}$ and   $\psi_{1}$ are convex functions, $\psi_{2}$ is the indicator function of  the interval $[0,+\infty[$ and $\varphi_{2} = t_{1} + t_2$, where $t_{1}$ is the indicator function of a nonempty closed convex set $D$, and $t_2$ is a proper lower-semicontinous  convex function which is Lipschitz continuous on its domain  with modulus $L_{t_2}>0$.  We assume that $D \subseteq \operatorname{dom} t_{2}$ and there is a point in the intersection of the relative interiors of $D$ and $\operatorname{dom}t_{2}$. Let us show that Assumption \ref{ass1} holds for this example.

\noindent (i) - (iii) From the definitions of operators $A$, $B$ and $C$, we can easily verify that $C$ is maximally monotone, $A$ is a continuous single-valued operator, and $B$ is Lipschitz continuous.

\noindent (iv) Operator $A$ is given by
    \begin{equation}\label{e:AFnm1}
			A: \mathbb{H} \to \mathbb{H}: \;\; (x,y) \mapsto \left(  y \nabla g(x), -g(x)  \right). 
	\end{equation}
    
     \vspace{-0.3cm}
     
\noindent    The Jacobian $\mathcal{J}_A$ of $A$ at $(x,y) \in \mathbb{H}$ is

 \vspace{-0.5cm}
    
    \begin{align*}
      \mathcal{J}_{A}(x,y) =    \begin{bmatrix}
		 y \nabla^{2}g(x) & \nabla g(x)\\
			-\nabla g(x)^\top  & 0 
		\end{bmatrix}.
  \end{align*}

 \vspace{-0.5cm}
  
     \noindent Note that, for every $(x,y) \in \mathbb{R}^n\times [0,+\infty[$, $\mathcal{J}_{A}(x,y)$ is positive semidefinite matrix. Since $A$ is continuous and monotone on $\mathbb{R}^n\times [0,+\infty[$, then $A$ is maximally monotone 
    on $\mathbb{R}^n\times [0,+\infty[$, see \cite{BauCom:11}. Moreover, since $\varphi$ and $\psi_{1}$ are proper lower semicontinuous convex functions, then $A+B+C$ is a monotone operator, which is a particular  instance of a pseudo-monotone operator.      

\noindent (v)   Note that in this case $\text{dom}\, C = D \times [0, \infty[$ and we have: 
\begin{equation*}
    J_{\gamma C}(x,y) = (\operatorname{prox}_{\gamma(t_{1}+t_{2})}(x),\operatorname{proj}_{[0,+\infty[}(y)).
\end{equation*}
Following Example \ref{ex:3terms}, the following relation holds:
\vspace{-0.5cm}
 \begin{align*}
     \text{prox}_{\gamma(t_{1}+t_{2})}(x) = \text{proj}_{D} \circ \text{prox}^{t_{1}}_{\gamma t_{2} }, \; \text{where} \;\;
        \text{prox}^{t_{1}}_{\gamma t_{2} } = (\text{Id} + \gamma   \partial t_{2}   \circ \text{proj}_{D})^{-1}.
             \vspace{-0.1cm}
        \end{align*} 

\vspace{-0.5cm}
\noindent Let $(u,w)\in \mathbb{H}^2$,  $q = \text{proj}_{\text{dom} C}w$, and 
$z= q - \gamma u$. Considering the decomposition  $w=(w_{1},w_{2}), u=(u_{1},u_{2})$  and $z=(z_{1},z_{2})$.  Define $p=\text{prox}^{t_{1}}_{\gamma t_{2}}(z_{1})$. Then, $p + \gamma \hat{p}=z_{1}$, for some $\hat{p} \in  \partial t_{2}   (\text{proj}_{D} p)$, and  $\|\hat{p}\| \leq L_{t_2} $. We further  have
\vspace{-0.5cm}
\begin{align*}
   \|\text{proj}_{\text{dom} C}w  - J_{\gamma C} z\|^2 
   &= \| \text{proj}_{D}w_{1} - \text{proj}_{D} \circ \text{prox}^{t_{1}}_{\gamma t_{2} }z_{1}\|^2 + \|\text{proj}_{[0,+\infty[}w_{2} - \text{proj}_{[0,+\infty[}z_{2} \|^2 \\
   & \leq \| \text{proj}_{D}w_{1} - \text{prox}^{t_{1}}_{\gamma t_{2} }z_{1}\|^2 + \|\text{proj}_{[0,+\infty[}w_{2} - z_{2}\|^2 \\
   &= \|\text{proj}_{D} w_{1} - z_{1} + \gamma \hat{p}\|^2 + \|\text{proj}_{[0,+\infty[}w_{2} - z_{2}\|^2 \\
   &\leq 2\|\text{proj}_{D} w_{1} - z_{1}\|^2 + 2\gamma^2 \|\hat{p}\|^2 + \|\text{proj}_{[0,+\infty[}w_{2} - z_{2}\|^2 \\
   & = 2 \gamma^2\|u_{1}\|^2 + 2\gamma^2 \|\hat{p}\|^2 + \gamma^2 \|u_{2}\|^2  \leq 2 \gamma^2 \|u\|^2 + 2\gamma^2 L_{t_{2}}^2.
\end{align*}

\vspace{-0.5cm}
\noindent Thus, we  get  $\zeta=\sqrt{2}$ and $\tau = \sqrt{2}L_{t_{2}}$.

\noindent  (vi) For every $z = (x,y)\in \mathbb{R}^{n} \times \mathbb{R}$ and $\bar{z} = (\bar x,\bar y)\in \mathbb{R}^{n} \times \mathbb{R}$, we have
   \vspace{-0.5cm}
   \begin{align*}
    \|Az  - A\bar z\|^2 
    &\leq 2\| \nabla g(x)\|^2\|z - \bar z\|^2 + 4L_g^2 \|z - \bar z\|^{2+2\nu} 
    + 4L_{g}^2|y|^2 \|z -\bar z\|^{2\nu}.  \vspace{-0.1cm}
    \end{align*}

 \vspace{-0.5cm}   
 \noindent Indeed, from the definition of $A$ (see \eqref{e:AFnm1}),
  \vspace{-0.5cm}
  \begin{align}
    \|A(x,y) - A(\bar x, \bar y)\|^2 = \|\nabla g(x)y - \nabla g(\bar x) \bar y\|^2 + |g(\bar x) - g(x)|^2. \label{eq:28}
     \vspace{-0.1cm}
  \end{align}

  \vspace{-0.5cm}
  \noindent Moreover,
   \vspace{-0.5cm}
  \begin{align}
      \|\nabla g(x)y - \nabla g(\bar x) \bar y\|^2 &= \|\nabla g(x)y - \nabla g(x) \bar y + \nabla g(x) \bar y - \nabla g(\bar x) \bar y\|^2 \nonumber\\
      &\leq 2 \|\nabla g(x)\|^2 |y-\bar y|^2 + 2|\bar y|^2 \|\nabla g(x) - \nabla g(\bar x)\|^2 \nonumber \\
      &\stackrel{(\ref{eq:holder})}{\leq} 2 \|\nabla g(x)\|^2 |y-\bar y|^2 + 2L_g^2|\bar y|^2 \|x-\bar x\|^{2\nu} \nonumber\\
      &\leq 2 \|\nabla g(x)\|^2 |y-\bar y|^2 + 4L_g^2|y|^2 \|x-\bar x\|^{2\nu}  + 4L_g^2|y - \bar y|^2 \|x-\bar x\|^{2\nu}, \label{eq:29}
       \vspace{-0.1cm}
  \end{align}
  \noindent where, in the first and last inequalities, we used the fact that $\|a + b\|^2 \leq 2\|a\|^2 + 2\|b\|^2$, for all $a,b \in \mathbb{R}^{n}$ and $n \in \mathbb{N}$. On other hand, from \eqref{eq:holderineq} we deduce that
  \vspace{-0.5cm}
    \begin{align}
       |g(\bar x) - g(x)|^2 &\leq 2 |\langle \nabla g(x), \bar x-x\rangle|^2 +\dfrac{2L_g^2}{(1+\nu)^2} \|\bar x-x\|^{2+2\nu} \nonumber \\
       &\leq 2 \| \nabla g(x)\|^2 \|\bar x-x\|^2 + \dfrac{2L_g^2}{(1+\nu)^2} \|\bar x-x\|^{2+2\nu}\nonumber\\
        &\leq 2 \| \nabla g(x)\|^2 \|\bar x-x\|^2 + 4L_g^2 \|\bar x-x\|^{2+2\nu}.
       \label{eq:30}
    \end{align}

\vspace{-0.5cm}
 \noindent Altogether, \eqref{eq:28}, \eqref{eq:29}, and \eqref{eq:30} lead to
 \vspace{-0.5cm}
    \begin{align*}
    &\|A(x,y) - A(\bar x, \bar y)\|^2\\
    &\leq 2\|
    \nabla g(x)\|^2\|(x,y) \!-\! (\bar x, \bar y)\|^2
    \!+\! 4L_g^2 \|(x,y) \!-\! (\bar x,\bar y)\|^2 \|\bar x-x\|^{2\nu} \!+\! 4L_g^2|y|^2 \|x- \bar x\|^{2\nu} \\
    &\leq 2\| \nabla g(x)\|^2\|z - \bar z\|^2 + 4L_g^2\|z - \bar z\|^{2+2\nu}+4L_g^2|y|^2 \|x- \bar x\|^{2\nu}.
    \end{align*}

\vspace{-0.5cm}
\noindent Hence,  Assumption \ref{ass1}.\ref{ass1vi} holds with   $\mu = 2 \nu$, $\beta=2+2 \nu$ and $\theta=2$. 
  \noindent Finally, the inequality $\|x- \bar x\| \leq \|(x,y) - (\bar x,\bar y)\|$ allows us to prove the statement.

\medskip
\noindent b) Consider \begin{equation}
(\forall (x,y)\in \mathbb{H})\quad
F(x,y) = \sum_{i=1}^{m} y_{i} g_{i}(x), 
 \vspace{-0.2cm}
\end{equation}
where  $(g_{i})_{1\le i\le \bar{m}}$ are twice differentiable convex functions, which have Lipschitz continuous gradients with constants $L_{g_i}>0$, $(g_{i})_{\bar{m}+1\le i\le m}$ are the following affine functions:
\[
(\forall i \in \{\bar{m}+1,\ldots,m\})\quad
g_{i}\colon x \mapsto l_{i}^{\top}x - r_{i}
\]
 with $(l_{i},r_i)_{\bar{m}+1\le i\le m}$ vectors in $\mathbb{R}^n\times \mathbb{R}$. In addition,  $\varphi_{1}$ and   $\psi_{1}$ are convex functions, $\psi_{2}$ is the indicator function of  the set $[0,+\infty[^{\bar{m}} \times \mathbb{R}^{m-\bar{m}}$, and $\varphi_{2} = t_{1} + t_2$, where $t_{1}$ is the indicator function of a nonempty closed convex set $D$ and $t_2$ is a proper lower-semicontinous  convex function which is Lipschitz on its domain  with modulus $L_{t_2}>0$.  We assume that $D \subseteq \operatorname{dom} t_{2}$ and there is a point in the intersection of the relative interiors of $D$ and $\operatorname{dom}t_{2}$. Let us show that Assumption \ref{ass1} holds for this example.

\noindent (i)-(iii) From the definitions of the operators $A$, $B$, and $C$, it is straightforward that $C$ is maximally monotone, $A$ is a continuous single-valued operator, and $B$ is Lipschitz continuous.

\noindent (iv) First note that $A + \{ \mathbf{0_n} \} \times \mathcal{N}_{[0,+\infty[^m]}$ is maximally monotone. Indeed, if we consider the notation $A = A_1 \times A_2$, where, for every $(x,y)\in \mathbb{R}^n\times \mathbb{R}^m$, $A_{1}(x,y) \in \mathbb{R}^{n}$ and $A_{2}(x,y) \in \mathbb{R}^{m}$, then $(A + \{\mathbf{0_n}\} \times \mathcal{N}_{[0,+\infty[^m}) (x,y) = A_1(x,y) \times [A_2(x,y) +   \mathcal{N}_{[0,+\infty[^m}(y)]$. Moreover, we have
    \begin{equation}\label{e:AFnm}
			A: \mathbb{H} \to \mathbb{H}: \;\; (x,y) \mapsto \Big( \underbrace{\sum_{i=1}^m y_i \nabla g_i(x)}_{A_1(x,y)}, \underbrace{-g(x)}_{A_2(x,y)}  \Big), 
	\end{equation}
where $g(x) = [g_1(x),\ldots,g_m(x)]^\top$.
The Jacobian $\mathcal{J}_A$ of $A$ at $(x,y)$ is
    \begin{align*}
      \mathcal{J}_{A}(x,y) =    \begin{bmatrix}
			\sum_{i=1}^m y_i \nabla^{2}g_i(x) & \nabla g(x)\\
			-\nabla g(x)^\top  & \mathbf{0}_{m\times m} 
		\end{bmatrix},
  \;\;\text{with}\;\;
  \nabla g(x) =[\nabla g_1(x),\ldots,\nabla g_m(x)].
  \end{align*}
     \noindent  Note that, for every $(x,y) \in \mathbb{R}^n \times ([0,+\infty[^{\bar{m}} \times \mathbb{R}^{m-\bar{m}})$, $\mathcal{J}_{A}(x,y)$ is a positive semidefinite matrix. Since $A$ is continuous and monotone on $\mathbb{R}^n \times ([0,+\infty[^{\bar{m}} \times \mathbb{R}^{m-\bar{m}})$, then $A$ is maximally monotone 
    on $\mathbb{R}^n\times ([0,+\infty[^{\bar{m}} \times \mathbb{R}^{m-\bar{m}})$, see \cite{BauCom:11}. Moreover, since $\varphi$ and $\psi_{1}$ are proper lower semicontinuous convex functions, then $A+B+C$ is a monotone operator. 

\noindent(v) Following the same argument as in the case $m=1$, we  get $\zeta=\sqrt{2}$ and $\tau = \sqrt{2}L_{t_{2}}$.

\noindent (vi) For every $z = (x,y)\in \mathbb{R}^{n} \times \mathbb{R}^{m}$ and $\bar{z} = (\bar{x},\bar{y}) \in \mathbb{R}^{n} \times \mathbb{R}^{m}$, we have

\vspace{-0.5cm}
\begin{align}
    \|A(x,y) - A(\bar{x},\bar{y})\|^2 \leq {a}(x,y) \|(x,y) - (\bar{x},\bar{y})\|^2 + {b}\|(x,y) - (\bar{x},\bar{y})\|^4, \label{eq:34}
\end{align}

\vspace{-0.5cm}
\noindent with $\displaystyle {b} = \dfrac{5}{2}\sum_{i=1}^{m} L_{g_i}^2 $, ${a}(x,y) = 2(\rho(x,y) +  \sum_{i=1}^{m} \| \nabla g_{i}(x)\|^2)$, and
\begin{align*}
    \rho(x,y) &= 2 \max\left(m \max_{1\le i\le m} \|\nabla g_{i}(x)\|^2, \left(\sum_{i=1}^{m} L_{g_i}|y_{i} |\right)^2\right).
\end{align*}
 \noindent Indeed, similarly to the previous example,
  \begin{align}
    \|A(x,y) - A(\bar{x},\bar{y})\|^2 = \left\|\sum_{i=1}^{m}\nabla g_{i}(x)y_{i} - \nabla g_{i}(\bar{x}) \bar{y}_{i}\right\|^2 + \sum_{i=1}^{m} |g_{i}(\bar{x})- g_{i}(x)|^2. \label{eq:31}
  \end{align}
\noindent Moreover,
\vspace{-0.5cm}
  \begin{align}
      &\left\|\displaystyle \sum_{i=1}^{m}\left(\nabla g_{i}(x)y_{i} - \nabla g_{i}(\bar{x}) \bar{y}_{i}\right)\right\|^2 = \left\|\sum_{i=1}^{m}\left(\nabla g_{i}(x)y_{i} - \nabla g_{i}(x) \bar{y}_{i} + \nabla g_{i}(x)\bar{y}_{i} - \nabla g_{i}(\bar{x}) \bar{y}_{i}\right)\right\|^2 \nonumber\\
      & \leq \left(\sum_{i=1}^{m} \|\nabla g_{i}(x)\| |y_{i} - \bar{y}_{i}| + \sum_{i=1}^{m}|\bar{y}_{i} | \|\nabla g_{i}(x)- \nabla g_{i}(\bar{x})\|      \right)^2 \nonumber \\
      & \leq \left(\max_{1\le i\le m} \|\nabla g_{i}(x)\| \sum_{i=1}^{m} |y_{i} - \bar{y}_{i}| + \sum_{i=1}^{m} L_{g_i}|\bar{y}_{i} | \|x-\bar{x}\|      \right)^2 \nonumber \\
       &\leq \! 2 \! \left( \! \sqrt{m} \! \max_{1\le i\le m} \! \|\nabla g_{i}(x)\| \|y \!-\! \bar{y}\| \!+\! \sum_{i=1}^{m} \!  L_{g_i}|y_{i} | \|x \!-\! \bar{x}\| \! \right)^2 \!\!\!+ \! 2 \! \left(\sum_{i=1}^{m} \! L_{g_i}|\bar{y}_{i} \!-\! y_{i} | \|x \!-\! \bar{x}\| \! \right)^2, \nonumber
  \end{align}

  \vspace{-0.5cm}
    \noindent where, in the last inequality, we  used the Cauchy-Schwarz inequality.
    Hence,
      \begin{align}
       \left\|\sum_{i=1}^{m}\nabla g_{i}(x)y_{i} - \nabla g_{i}(\bar{x}) \bar{y}_{i}\right\|^2 & \leq \rho(x,y)\left(\|y - \bar{y}\| + \|x-\bar{x}\|\right)^2 
       + 2\left(\sum_{i=1}^{m} L_{g_i}^2\right) \|\bar{y} - y\|^2  \|x-\bar{x}\|^2\nonumber\\
      & \leq2\rho(x,y) \|(x,y) - (\bar{x},\bar{y})\|^2 + 2\left(\sum_{i=1}^{m} L_{g_i}^2 \right)\|(x,y) - (\bar{x},\bar{y})\|^4.\label{eq:32}
  \end{align}

  \vspace{-0.5cm}
   \noindent On other hand, using \eqref{eq:holderineq} with $\nu=1$, 
    \vspace{-0.2cm}
     \begin{align}
       \sum_{i=1}^{m} |g_{i}(\bar{x})- g_{i}(x)|^2&\leq \sum_{i=1}^{m} 2 |\langle \nabla g_{i}(x), \bar{x}-x\rangle|^2 + {\sum_{i=1}^{m}} \dfrac{L_{g_i}^2}{2}  \|x-\bar{x}\|^{4} \nonumber \\
       &\leq 2 \sum_{i=1}^{m} \| \nabla g_{i}(x)\|^2 \|x-\bar{x}\|^2 + \sum_{i=1}^{m} \dfrac{L_{g_i}^2}{2} \|x-\bar{x}\|^{4}. \label{eq:33}
       \vspace{-0.2cm}
    \end{align}

    \vspace{-0.5cm}
   \noindent Hence, \eqref{eq:31}, \eqref{eq:32}, \eqref{eq:33}, and the fact $\|x-\bar{x}\|\leq \|(x,y) - (\bar{x},\bar{y})\|$ yield \eqref{eq:34}. From \eqref{eq:34} it follows that  Assumption \ref{ass1}.\ref{ass1vi} holds with   $\mu = 2$ and $\theta=4$ (note that in this case $c(z_1) = 0$). 
\end{example}

\noindent \begin{remark}
    
 One concrete application of the {example} above is the quadratically constrained quadratic program (QCQP) problem: 
 \vspace{-0.5cm}
        \begin{align}
            & \min_{x \in \mathbb{R}^{n}, x \geq 0} \, \frac{1}{2}x^\top Q_0 x + b^\top x + c \nonumber\\
            & \text{s.t. } \; \frac{1}{2}x^\top Q_{i}x + l^\top_{i}x \leq  r_{i} \; \forall i\in\{1,\ldots,\bar{m}\}, \quad l^\top_{i}x =  r_{i} \; \forall i\in\{\bar{m}+1,\ldots,m\},   \label{eq:qcqp}
        \end{align}
        where  $(Q_{i})_{0\le i\le \bar{m}}$ are  positive semidefinite  matrices of dimension $n \times n$, and $(l_{i})_{1\le i\le m}$ and $b$ are vectors in $\mathbb{R}^n$. Rewriting the QCQP into the Lagrange primal-dual form 
        using the dual variables $y=(y_i)_{1\le i\le m}$, we get
        \vspace{-0.5cm}
        \begin{align}
        \label{opA:QCQP}
			&A: \mathbb{H} \to \mathbb{H}: \; (x,y) \mapsto \left(\sum_{i=1}^{m} (Q_{i}x + l_{i}) y_{i},\left(- \frac{1}{2}x^\top Q_{i}x - l^\top_{i}x +  r_{i}\right)_{1\leq i \leq m}\right) \nonumber\\
            &B: \mathbb{H}  \to \mathbb{H}:  \; (x,y) \mapsto (Q_0x + b,\mathbf{0}_m)\\
            &C: \mathbb{H}  \to 2^{\mathbb{H}}:   \;  (x,y) \mapsto (\mathcal{N}_{[0,+\infty)^{n}}(x) 
 \times (\mathcal{N}_{[0,+\infty)^{\bar{m}}}((y_i)_{1\le i \le \bar{m}}) \times \{\mathbf{0}_{m-\bar{m}}\}), \nonumber
	\end{align}

\vspace{-0.5cm}
\noindent where  we have set $\mathbb{H}= \mathbb{R}^{n} \times \mathbb{R}^{m}$ and $Q_{i} = 0$ for every $i\in \{\bar{m}+1,\ldots,m\}$. QCQP's have many applications, e.g.,  in signal processing \cite{HuaPal:14},  triangulation in computer vision \cite{AhoAgaTho:12}, semi-supervised learning \cite{CheLi:21}, learning of kernel matrices \cite{Lan:02},  or steering direction estimation for RADAR detection~\cite{Mai:11}.
\end{remark}

   \begin{example}\label{ex:frac} 
    \noindent Consider the following problem:
    \vspace{-0.5cm}
    \begin{align}
        &\min_{ x \in D} f(x), \label{constProb} 
    \end{align}

    \vspace{-0.5cm}
    \noindent where $D$ is a nonempty closed convex subset of $\mathbb{H} = \mathbb{R}^n$ and $f:\mathbb{R}^{n} \to \mathbb{R}$ is a continuously differentiable pseudo-convex function on $D$. Some examples of pseudo-convex functions are encountered in fractional programs, see below and also  \cite{Sch:81,HasJad:07}.
    If $\bar{x}$ satisfies  the first-order optimality condition for 
    \eqref{constProb}, then we have the following inclusion:
     \vspace{-0.5cm}
    \begin{align}
    \label{prob1}
		0 \in A\bar{x} + C\bar{x},
     \vspace{-0.1cm}
	\end{align}

    \vspace{-0.5cm}
    \noindent where the operators are  
    \begin{equation}
    \label{eq:AQuadFrac}
    Ax= \begin{cases}
    \nabla f(x) &  \text{if} \; x \in D  \\
    \varnothing & \text{otherwise,} 
    \end{cases}
   \quad  {B=0,}\;\;  \text{ and } \;\;  C=  \mathcal{N}_{D}.
    \end{equation}

    \noindent  (Fractional programming) Consider the following quadratic  fractional programming problem: 
    \begin{align}
        &\min_{x\in D} f(x)  \;   :=  \dfrac{ \frac{1}{2}x^\top Q x - h^{\top}x +  h_{0}}{d^\top x + d_0}   \quad \text{with} \quad D= \{x \in \mathbb{R}^{n}\mid d^\top x\geq 0\}, \label{QuadFracPro}
    \end{align}
    \noindent {where $f:D \to \mathbb{R}$}, $d_{0}\in ]0,+\infty[$, $h_{0} \in \mathbb{R}$, $(h,d) \in (\mathbb{R}^{n})^2$, and $Q \in \mathbb{R}^{n\times n}$ is a symmetric matrix.
     When the matrix $Q$ is positive semidefinite, the function $f$ is pseudo-convex since it is the ratio of convex over concave functions, see \cite{BorLew:06}. Otherwise, \cite{CamCroMar:02,CarMar:07} present necessary and sufficient conditions for the function $f$ to be pseudo-convex. Particular cases of \eqref{QuadFracPro} are problems whose objective is a sum of a linear and a linear fractional function, i.e., when $Q = (rd^{\top} + dr^{\top})/2$, which yields the following formulation:
 \vspace{-0.5cm}
   \begin{align}
        &\min_{x\in D} f(x)  \;   := r^\top x + \dfrac{{h}^\top x + h_{0}}{d^\top x + d_0} \quad \text{with} \quad D= \{x \in \mathbb{R}^{n}\mid d^\top x\geq 0\}. \label{LinFracPro}
        \end{align}
    
    \vspace{-0.5cm}
    \noindent Indeed,  setting $Q = (rd^\top + dr^\top)/2$ in \eqref{QuadFracPro} yields
    \begin{align*}
      \dfrac{ \frac{1}{2}x^\top Q x - h^{\top}x +  h_{0}}{d^\top x + d_0} &= \dfrac{ \frac12 x^{\top}rd^\top x  - h^{\top}x +  h_{0} }{d^\top x + d_0} =
      \frac12 r^{\top}x + \dfrac{h_{0} -(h + \frac{d_{0}}{2}r)^{\top}x}{d^\top x + d_0}.
    \end{align*}
     Then, by defining $\bar{r} = r/2$ and $\bar{h} = - h - \frac{d_{0}}{2} r$, we obtain a problem of the form  \eqref{LinFracPro}. Reference \cite{MarCar:12} presents several cases when  $f$ is pseudo-convex over the polyhedral set $\{x\in\mathbb{R}^{n}\mid d^\top x + d_{0} > 0$\}, namely,  if $r = \eta d$, with $\eta \geq 0$, or  $h=\zeta d$, with $h_0 - \zeta d_{0} \geq 0$ (see \cite[Theorem 1]{MarCar:12} for more details). Fractional programming arises  e.g., in portfolio   and  transportation problems (see \cite{MarCar:12} for more details). 
    \end{example} 
Below, we show that operators $A$, $B$, and $C$ defined in \eqref{eq:AQuadFrac} satisfy Assumption \ref{ass1}.

\noindent (i) - (iii) From the definitions of the operators $A$, $C$, we can easily see that $C$ is maximally monotone and $A$ is a continuous single-valued operator.
    
\noindent (iv) Let us show that $A+C$ is pseudo-monotone. Indeed,   consider $(x,w) \in D^2$. Assume that 
    \vspace{-0.5cm}
    \begin{align}
    (\forall \hat{x} \in Cx)\quad 
        \langle \nabla f(x) + \hat{x}, w-x \rangle \geq 0.
        \label{eq:24}
    \end{align}

    \vspace{-0.5cm}
    \noindent We need  to show that
    \vspace{-0.5cm}
     \begin{align}
     (\forall \hat{w} \in Cw)\quad
        \langle \nabla f(w) + \hat{w}, w-x \rangle \geq 0.  \label{eq:25}
    \end{align}

    \vspace{-0.5cm}
    \noindent It follows from the definition of the normal cone in \eqref{eq:normalcone} that
    \vspace{-0.5cm}
    \begin{align}
    (\forall \hat{x} \in Cx)\quad
        \langle \hat{x}, w-x \rangle \leq 0 \quad \text{and} \quad
    (\forall \hat{w} \in Cw)\quad 
        \langle \hat{w}, x-w \rangle \leq 0. \label{eq:27}
    \end{align}

    \vspace{-0.5cm}
    \noindent Combining \eqref{eq:24} and the first inequality in \eqref{eq:27} yields
     \vspace{-0.5cm}
    \begin{align*}
        \langle \nabla f(x), w-x \rangle \geq 0 .
    \end{align*}

     \vspace{-0.5cm}
    \noindent Since $f$ is pseudo-convex, then the above inequality implies that 
    \vspace{-0.5cm}
    \begin{align*}
        \langle \nabla f(w), w-x \rangle \geq 0. 
    \end{align*}

    \vspace{-0.5cm}
    \noindent Hence, from the previous inequality and the second one in \eqref{eq:27}, we derive \eqref{eq:25}. Therefore, Assumption \ref{ass1}.\ref{ass1iv} holds. 

\noindent (v) Note that $J_{\gamma C} z = \text{proj}_{\text{dom} C}(z)$. Using the nonexpensiveness of the projection operator, since $q = \text{proj}_{\text{dom} C}w$ and
		$z= q - \gamma u$, we get
        \vspace{-0.5cm}
		\begin{align*}
			\|\text{proj}_{\text{dom} C}(w) - \text{proj}_{\text{dom} C}(z)\| &\leq \|q-z\| =  \gamma \|u\|.
		\end{align*}

        \vspace{-0.5cm}
		\noindent Hence, in this case the inequality holds with $\zeta = 1$ and any $ \tau>0$.

\noindent (vi) Before showing that this assumption is valid for \eqref{QuadFracPro}, let us fist derive a more focused inequality for  \eqref{LinFracPro}.

\begin{enumerate}
\medskip 
 \item Consider problem \eqref{LinFracPro}  and operator $A$ defined in \eqref{eq:AQuadFrac}. 
 The Hessian of $f$ is
    \vspace{-0.5cm}
        \begin{align*}
        (\forall x \in D)\quad 
            \nabla^2 f(x) = \dfrac{2 ( h^{\top}x + h_{0})}{(d^{\top}x + d_{0})^3} dd^\top - \dfrac{1}{(d^{\top}x + d_{0})^2}(d  h^{\top} +  hd^{\top}). 
        \vspace{-0.2cm}
        \end{align*}

        \vspace{-0.5cm}
       \noindent Consider $(x, \bar x) \in D^2$. By the mean value inequality, there exists $w \in (x,\bar x)$ s.t. 
       \vspace{-0.1cm}
        \begin{align*}
           &\|\nabla f (x) - \nabla f (\bar x) \|^2 \leq \|\nabla^2 f(w)\|^2 \|x - \bar x\|^2 \\
            &\leq \left( \dfrac{2}{(d^{\top}w + d_{0})^3}   \|d\|^2 | h^{\top}w + h_{0}| + \dfrac{2}{(d^\top w + d_{0})^2} |d^{\top} h|\right)^2 \|x - \bar x\|^2 \\ 
            & \leq \left( \dfrac{2}{d_{0}^3} \|d\|^2| h^{\top}(x-w)|  +\dfrac{2}{d_{0}^3} \|d\|^2 | h^{\top}x + h_{0}| + \dfrac{2}{d_{0}^2} |d^{\top} h| \right)^2 \|x - \bar x\|^2 \\
            & \leq 2\left[ \left( \dfrac{2}{d_{0}^3} \|d\|^2| h^{\top}(x-w)| \right)^2 + \left(\dfrac{2}{d_{0}^3} \|d\|^2 | h^{\top}x + h_{0}| + \dfrac{2}{d_{0}^2} |d^{\top} h| \right)^2 \right] \|x - \bar x\|^2 \\
            &\leq \dfrac{8}{d_0^6} \|d\|^4\|h\|^2\|x - \bar x\|^4 + \dfrac{8}{d_0^4}\left(\dfrac{\|d\|^2}{d_{0}}  |h^{\top}x + h_{0}| + |d^{\top} h| \right)^2 \|x - \bar x\|^2, \vspace{-0.2cm}
        \end{align*}

      \vspace{-0.5cm}
      \noindent where in the third inequality we used the fact that, since $D$ is convex, $w \in D$, hence $d^{\top}w \geq 0$, in the fourth inequality, we used the convexity of $(\cdot)^2$, and in the last one we used  that $w \in (x,\bar x)$. Hence,  Assumption \ref{ass1}.\ref{ass1vi} holds with  ${a}(z_{1}) = \dfrac{8}{d_0^4}\left(\dfrac{\|d\|^2}{d_{0}}  | h^{\top}z_{1} + h_{0}| + |d^{\top}h| \right)^2$,  ${b}(z_{1}) = \dfrac{8}{d_0^6} \|d\|^4\| h\|^2$,  ${c}(z_{1}) = 0$, $\mu = 2$, and $\theta=4$.
   \item {
\noindent Consider problem \eqref{QuadFracPro}  and operator $A$ defined in \eqref{eq:AQuadFrac}. The Hessian of $f$ is 
        \vspace{-0.5cm}
        \begin{align*}
        (\forall x \in D)\quad 
            \nabla^2 f(x) = \dfrac{Q}{d^{\top}x + d_{0}} + \dfrac{ (2f(x)dd^{\top} - (Qx - h)d^{\top} - d(Qx - h)^{\top})}{(d^{\top}x + d_{0})^2}.
        \vspace{-0.4cm}
        \end{align*}

        \vspace{-0.5cm}
         \noindent Consider $(x, \bar x) \in D^2$. By the mean value inequality, there exists $w \in (x,\bar x)$ s.t.
        \vspace{-0.5cm}
        \begin{align}
           &\|\nabla f (x) - \nabla f (\bar x) \| \leq \|\nabla^2 f(w)\| \|x - \bar x\|.  \label{eq:35}
        \vspace{-0.2cm}
        \end{align} 

        \vspace{-0.5cm}
        \noindent After some calculations similar to those in the previous example, we get
        \vspace{-0.5cm}
        \begin{align}
            &\|\nabla^2 f(w)\| \leq  \dfrac{1}{d^{\top}w + d_{0}}\|Q\| +  \dfrac{|w^\top Q w - 2h^{\top}w +  2h_{0}|}{(d^{\top}w + d_{0})^3 } \|d\|^2  + \dfrac{ 2\|d\|\|Qw-h\| }{{(d^{\top}w + d_{0})^2}} \nonumber   \\
            &\leq \dfrac{\|Q\|}{d_{0}} +  \dfrac{\|d\|^2}{d_{0}^3} ( \|Q\|\|w\|^2 + 2| h^{\top}w -  h_{0}|) + \dfrac{ 2\|d\|\|Qw-h\| }{{ d_{0}^2}} \nonumber \\
            &\leq \dfrac{\|Q\|}{d_{0}} +  \dfrac{\|d\|^2}{d_{0}^3} ( 2\|Q\|\|w-x\|^2 + 2\|Q\|\|x\|^2  + 2|h^{\top}(w-x)| + 2|h^{\top}x -  h_{0}|) \nonumber \\
            &\quad + \dfrac{ 2\|d\| }{{ d_{0}^2}}(\|Qx-h\| + \|Q\| \|w-x\|)  \nonumber \\
            &\leq \dfrac{\|Q\|}{d_{0}} +  \dfrac{2\|d\|^2}{d_{0}^3} ( \|Q\|\|\bar{x}-x\|^2 + \|Q\|\|x\|^2  + \|h\|\|\bar{x}-x\| + |h^{\top}x -  h_{0}|) \nonumber\\
            &\quad + \dfrac{ 2\|d\| }{{ d_{0}^2}}(\|Qx-h\| + \|Q\| \|\bar{x}-x\|) \nonumber \\
            & \leq \dfrac{\|Q\|}{d_{0}} +  \dfrac{2\|d\|^2}{d_{0}^3}(\|Q\|\|x\|^2 + |h^{\top}x -  h_{0}|) + \dfrac{ 2\|d\| }{{ d_{0}^2}}\|Qx-h\|
             \nonumber \\
             & \quad + 
              \left( \dfrac{2\|d\|^2}{d_{0}^3} \|h\|
            + \dfrac{ 2\|d\| }{{ d_{0}^2}}\|Q\| \right)\|\bar{x}-x\| + \dfrac{2\|d\|^2}{d_{0}^3}
             \|Q\|\|\bar{x}-x\|^2. \label{eq:36}
             \vspace{-0.2cm}
        \end{align} 
      
      \vspace{-0.5cm}
      \noindent We deduce from \eqref{eq:35} and \eqref{eq:36} that
      \vspace{-0.5cm}
        \begin{align*}
           &\|\nabla f (x) - \nabla f (\bar x) \|^2 \\ 
           &\leq  \dfrac{12\|d\|^4}{d_{0}^6}
             \|Q\|^2\|\bar{x}-x\|^6+3 \left( \dfrac{2\|d\|^2}{d_{0}^3} \|h\|
            + \dfrac{ 2\|d\| }{{ d_{0}^2}}\|Q\| \right)^2\|\bar{x}-x\|^4
           \\&+ 3 \left( \dfrac{\|Q\|}{d_{0}} +  \dfrac{2\|d\|^2}{d_{0}^3}(\|Q\|\|x\|^2 + |h^{\top}x -  h_{0}|) + \dfrac{ 2\|d\| }{{ d_{0}^2}}\|Qx-h\| \right)^2 \|\bar{x}-x\|^2.
           \vspace{-0.2cm}
        \end{align*} 
        
        \vspace{-0.5cm}
        \noindent Therefore,  Assumption \ref{ass1}.\ref{ass1vi} is satisfied with  $\mu = 2$, $\theta = 4$, $\beta = 6$,
        and
        \begin{align}
        & c(z_{1}) =  \dfrac{12\|d\|^4}{d_{0}^6}\|Q\|^2,\quad b(z_{1}) = 12 \left( \dfrac{\|d\|^2}{d_{0}^3} \|h\| + \dfrac{\|d\| }{{ d_{0}^2}}\|Q\| \right)^2,\nonumber\\
        &a(z_{1}) = 3 \left( \dfrac{\|Q\|}{d_{0}} +  \dfrac{2\|d\|^2}{d_{0}^3}(\|Q\|\|z_{1}\|^2 + |h^{\top}z_{1} -  h_{0}|) + \dfrac{ 2\|d\| }{{ d_{0}^2}}\|Qz_{1}-h\| \right)^2. 
  \label{eq:42}
        \end{align}}

\end{enumerate}  

\vspace{-0.5cm}  
\noindent From the previous discussion, one can see that our assumptions cover a broad range of optimization problems arising in applications.  


\section{An adaptive forward-backward-forward algorithm}
Adaptive methods are widely used in optimization due to their ability to simplify stepsize tuning \cite{BotCseVuo:20, Ton:23, Pet:22}. Unlike previous approaches to the forward-backward-forward algorithm, such as those in \cite{Tse:00}, we introduce two novel adaptive strategies that bypass the need for computationally expensive Armijo-Goldstein stepsize rules, typically employed when the operator is assumed to be continuous \cite{Tse:00, ThoVuo:19}. In both strategies, the stepsize is determined using the current iterate and the parameters that define the operator’s properties.

\subsection{Investigated algorithm}
In this section, we introduce a new algorithm for solving problem  \eqref{prob}. Our algorithm is similar to the forward-backward-forward splitting algorithm in  \cite{Tse:00} as  it also involves two explicit (forward) steps using $A$ and $B$,  and one implicit (backward) step using $C$. However, the novely of our iterative process lies in the adaptive way we choose the stepsize $\gamma_k$ at each iteration $k$, which is adapted to the assumptions considered on the operators $A, B$, and $C$ (see Assumption \ref{ass1}). 

\vspace{0.2cm}

\begin{center}
		\noindent\fbox{%
			\parbox{10 cm}{%
      \small
				\textbf{Adaptive Forward-Backward-Forward Algorithm (AFBF)}:\\
   1. Choose 
    the initial estimate 
				$ x_{0}\in \operatorname{dom} C$. \\
     2. For $k\geq 0$ do:  
    \begin{itemize}
				\item [(a)] \label{e:sAa}  Compute the stepsize
                $\gamma_{k} > 0$.
				\item [(b)]\label{e:sAb}  $z_{k}  = x_{k}  - \gamma_{k} (A x_{k} + Bx_{k})$
				\item [(c)] \label{e:sAc}  $p_{k} = J_{\gamma_{k}C} z_{k} $
                \item [(d)] \label{e:sAd}  $q_k = p_{k} - \gamma_{k}\left( A p_{k}+ Bp_{k}\right)$
				\item [(e)] \label{e:sAe} $\hat{x}_{k} = q_{k} - z_k+x_k$
				\item [(f)] \label{e:sAf}  $x_{k+1} = \text{proj}_{\text{dom}C}(\hat{x}_{k})$
    \end{itemize}
		}}
	\end{center}
\medskip
\noindent Typically,  to prove the convergence of a forward-backward-forward splitting  algorithm, one needs the  operators $A$ and $B$ to satisfy a Lipschitz type inequality \cite{Tse:00}:
\vspace{-0.5cm}
\begin{align}
\label{eq:12}
     \gamma_{k}^2 \|Ax_{k}+Bx_{k} - Ap_{k}  - Bp_{k} \|^2 \leq \alpha_{k}  \|x_{k} - p_{k} \|^2, \vspace{-0.2cm}
\end{align}

\vspace{-0.5cm}
\noindent where $\alpha_k \in ]\alpha_{\min},\alpha_{\max}[\subset ]0,1[$, $k\in \mathbb{N}$.  In our case it appears  difficult to find a positive stepsize $\gamma_{k}$ satisfying  \eqref{eq:12} as  the operator $A$ is not assumed to be  Lipschitz.  However, imposing appropriate assumptions on the operator $A$ (e.g., some generalized Lipschitz type inequality as in  Assumption \ref{ass1}.\ref{ass1vi}), we will show that we can ensure \eqref{eq:12}. In the next sections we provide two adaptive choices for $\gamma_k$ that enable us to prove  the convergence of AFBF. 


\subsection{First adaptive choice for the stepsize}
In this section, we design a novel strategy to choose $\gamma_k$ when the operator $A$ satisfies Assumption \ref{ass1}.\ref{ass1vi} with $\mu = 2$. Recall that $(\zeta,\tau) \in ]0,+\infty[^2$ are the parameters which satisfying Assumption\ref{ass1}.\ref{ass1v}. 

\vspace{0.3cm}
    
	\begin{center}
     \small
		\noindent\fbox{%
			\parbox{12 cm}{%
				\textbf{Stepsize Choice 1}: \\
                1. Choose $0 < \alpha_{\min} \leq \alpha_{\max} <1$ and $\sigma>0$. \\
                2. For $k \geq 0$ do:
                \begin{itemize}
				\item [(a)] \label{e:s1a} Compute $d(x_{k}) =  \zeta \|Ax_k + Bx_k\| + \tau $ and choose $\alpha_{k} \in [\alpha_{\min},\alpha_{\max}]$. 
               \item [(b)]  \label{e:s1b} Choose $\gamma_{k}$ such that
                \begin{equation} \label{eq:gamma}
              \gamma_{k} \in   \begin{cases}
                 [\sigma,\bar{\gamma}_{k}] &  \text{if} \; \sigma \leq \bar{\gamma}_{k}  \\
                \bar{\gamma}_{k}  & \text{otherwise,} 
                \end{cases}
                \end{equation}
               where $\bar{\gamma}_{k} > 0$ is the root of the following equation in $\gamma$:
                \end{itemize}
                \vspace{-0.2cm}
                 \begin{align}
                \label{eqt:stepsize}
                       b(x_{k}) d(x_{k})^{\theta-2} \gamma^{\theta}  +   c(x_{k})  d(x_{k})^{\beta-2} \gamma^{\beta} + \left(L_B^2+a(x_{k})\right)\gamma^2   =   \frac{\alpha_{k}}{2}.
                \end{align}
			  }}
	\end{center}

\medskip 

    \begin{remark}
    Stepsize choice  \eqref{eq:gamma} could be just considered  $\gamma_{k} = \bar{\gamma}_{k}$. However, the positive parameter $\sigma$  allows us to choose a larger range of stepsizes.  
    \end{remark}

    \noindent Note that equation \eqref{eqt:stepsize}, which is a polynomial equation when $\mu$, $\theta$, and $\beta$ are integers, is well defined, i.e., there exists  $\bar\gamma_{k}>0$ satisfying equation \eqref{eqt:stepsize}. Indeed, define
    \vspace{-0.5cm}
    \begin{multline*}
    (\forall \gamma \in [0,+\infty))\quad 
     h(\gamma) =  2 b(x_{k}) d(x_{k})^{\theta-2} \gamma^{\theta}  +   2c(x_{k}) d(x_{k})^{\beta-2} \gamma^{\beta} + 2(a(x_{k})+L_B^2)\gamma^2  - \alpha_{k}, \vspace{-0.3cm}
    \end{multline*}

    \vspace{-0.5cm}
    \noindent and $w_{k} = \sqrt{\alpha_{k}}/L_B$. Note that, we have $h(w_{k}) \geq \alpha_{k} > 0$ and $h(0) < 0$. Since $h$ is continuous on $[0,w_{k}]$, there exists $\bar \gamma_{k} \in ]0,w_{k}[$ such that 
    $h(\bar\gamma_{k})=0$. Moreover, since $h'(\gamma) \geq 4L_B^2\gamma >0$ for every $\gamma \in ]0,+\infty[$, then $h$ is strictly increasing  on $]0,+\infty[$. Hence, there exists exactly one $\bar\gamma_{k}>0$ such that the equality in \eqref{eqt:stepsize} is satisfied and  $h(\gamma_{k}) \leq 0 = h(\bar\gamma_{k})$ for $\gamma_{k}$ defined in \eqref{eq:gamma}.

	\begin{lemma}
        \label{lem1}
		Let Assumption \ref{ass1} hold  with $\mu = 2$.
  Let $k\ge 0$ and  let $\gamma_{k}$ be given by
  \eqref{eq:gamma}.
  Then, inequality \eqref{eq:12} is satisfied and
        \vspace{-0.2cm}
		\begin{align}
			\gamma_{k} < \eta := \sqrt{\dfrac{\alpha_{\max}}{2  L_B^2  }}. \label{eq:19}
        \vspace{-0.2cm}
		\end{align}
	\end{lemma}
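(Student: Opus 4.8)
\emph{Proof strategy.} The plan is to use the fact that the stepsize equation~\eqref{eqt:stepsize} has been designed precisely so that the forward-step mismatch $\gamma_k^2\|Ax_k+Bx_k-Ap_k-Bp_k\|^2$ is controlled by $\alpha_k\|x_k-p_k\|^2$. First I would note that \eqref{eq:gamma} yields $\gamma_k\le\bar\gamma_k$ in both branches, so that, since the function $h$ from the paragraph preceding the lemma is strictly increasing with $h(\bar\gamma_k)=0$, one has $h(\gamma_k)\le 0$, i.e.
\begin{align*}
2b(x_k)d(x_k)^{\theta-2}\gamma_k^{\theta}+2c(x_k)d(x_k)^{\beta-2}\gamma_k^{\beta}+2\bigl(a(x_k)+L_B^2\bigr)\gamma_k^2\;\le\;\alpha_k\;\le\;\alpha_{\max}. \tag{$\star$}
\end{align*}
All summands on the left of $(\star)$ are nonnegative, so in particular $2L_B^2\gamma_k^2\le\alpha_{\max}$, which gives \eqref{eq:19} (strictly, by refining this estimate as in the argument preceding the lemma, e.g.\ evaluating $h$ at $\sqrt{\alpha_k/(2L_B^2)}$ and using strict monotonicity of $h$ together with $\alpha_k\le\alpha_{\max}$).

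For \eqref{eq:12}, the next step is to bound its left-hand side. Note that $x_k\in\operatorname{dom}C$ (since $x_0\in\operatorname{dom}C$ and the last line of the algorithm is a projection onto $\operatorname{dom}C$) and $p_k=J_{\gamma_kC}z_k\in\operatorname{dom}C$. Combining the elementary inequality $\|u+v\|^2\le 2\|u\|^2+2\|v\|^2$, the $L_B$-Lipschitz continuity of $B$ on $\operatorname{dom}C$ (Assumption~\ref{ass1}.\ref{ass1ii}), and Assumption~\ref{ass1}.\ref{ass1vi} with $\mu=2$ applied to $(z_1,z_2)=(x_k,p_k)$, one obtains
\begin{align*}
\|Ax_k+Bx_k-Ap_k-Bp_k\|^2\;\le\;2\bigl(a(x_k)+L_B^2\bigr)\|x_k-p_k\|^2+2b(x_k)\|x_k-p_k\|^{\theta}+2c(x_k)\|x_k-p_k\|^{\beta}.
\end{align*}
If $x_k=p_k$, then \eqref{eq:12} holds trivially; otherwise, multiplying by $\gamma_k^2$ and dividing by $\|x_k-p_k\|^2$ reduces \eqref{eq:12} to showing
\begin{align*}
2\bigl(a(x_k)+L_B^2\bigr)\gamma_k^2+2b(x_k)\gamma_k^2\|x_k-p_k\|^{\theta-2}+2c(x_k)\gamma_k^2\|x_k-p_k\|^{\beta-2}\;\le\;\alpha_k.
\end{align*}

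The crucial ingredient is the resolvent estimate $\|x_k-p_k\|\le\gamma_k\,d(x_k)$, which I would obtain by instantiating Assumption~\ref{ass1}.\ref{ass1v} with $w=x_k$ (hence $q=\operatorname{proj}_{\operatorname{dom}C}x_k=x_k$), $u=Ax_k+Bx_k$, and $\gamma=\gamma_k$: then $z=x_k-\gamma_k(Ax_k+Bx_k)=z_k$, $J_{\gamma_kC}z=p_k$, and \eqref{ass:resC} reads exactly $\|x_k-p_k\|\le\gamma_k(\zeta\|Ax_k+Bx_k\|+\tau)=\gamma_k\,d(x_k)$. Since $\theta\ge 2$ and $\beta\ge 2$, the maps $t\mapsto t^{\theta-2}$ and $t\mapsto t^{\beta-2}$ are nondecreasing on $[0,+\infty[$, hence $\|x_k-p_k\|^{\theta-2}\le(\gamma_k d(x_k))^{\theta-2}$ and $\|x_k-p_k\|^{\beta-2}\le(\gamma_k d(x_k))^{\beta-2}$. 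Substituting these into the last display turns its left-hand side into $2(a(x_k)+L_B^2)\gamma_k^2+2b(x_k)d(x_k)^{\theta-2}\gamma_k^{\theta}+2c(x_k)d(x_k)^{\beta-2}\gamma_k^{\beta}$, which is $\le\alpha_k$ by $(\star)$; multiplying back by $\|x_k-p_k\|^2$ gives \eqref{eq:12}.

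I do not expect a genuine obstacle here: the proof is essentially bookkeeping, and the only delicate point is choosing the right instantiation of Assumption~\ref{ass1}.\ref{ass1v} ($w=x_k$, $u=Ax_k+Bx_k$, $\gamma=\gamma_k$) so that the abstract resolvent bound reproduces exactly the quantity $d(x_k)$ around which \eqref{eqt:stepsize} is built; once that is in place, the monotonicity of $t\mapsto t^{\theta-2}$ and $t\mapsto t^{\beta-2}$ lets $(\star)$ close the estimate. The same argument carries over verbatim to the multi-term generalization of Assumption~\ref{ass1}.\ref{ass1vi} mentioned in the Remark, with \eqref{eqt:stepsize} adjusted accordingly.
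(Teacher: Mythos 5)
Your proof is correct and follows essentially the same route as the paper: bound the mismatch via the Lipschitz property of $B$ and Assumption~\ref{ass1}.\ref{ass1vi}, invoke the resolvent estimate \eqref{ass:resC} to get $\|x_k-p_k\|\le\gamma_k d(x_k)$, and close with the defining equation \eqref{eqt:stepsize} together with $h(\gamma_k)\le h(\bar\gamma_k)=0$; the bound \eqref{eq:19} follows by dropping the nonnegative terms, exactly as in the paper. The only (harmless) difference is that you instantiate Assumption~\ref{ass1}.\ref{ass1v} with $w=x_k$ whereas the paper takes $w=\hat{x}_{k-1}$; both give $q=x_k$ and the same inequality, and your choice has the minor advantage of also covering $k=0$ without referring to a nonexistent $\hat{x}_{-1}$.
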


\begin{proof}{Proof}
		\noindent From basic properties of the norm, 
        \vspace{-0.7cm}
		\begin{align*}
			\gamma_{k}^2 \|Ax_{k} + Bx_{k}- Ap_{k}  - Bp_{k} \|^2 \leq 2\gamma_{k}^2 \|Bx_{k} - Bp_{k}\|^2 + 2\gamma_{k}^2 \|Ax_{k} - Ap_{k}\|^2.
		\end{align*}

        \vspace{-0.5cm}
		\noindent Using  the Lipschitz continuity of operator $B$ on $\operatorname{dom}C$ and Assumption \ref{ass1}.\ref{ass1vi} for $\mu \in ]0,+\infty[$, we get 
        \vspace{-0.5cm}
		\begin{align}
			&\gamma_{k}^2 \|Ax_{k}+Bx_{k} - Ap_{k}-Bp_{k}\|^2 \nonumber \\ & \leq 2\gamma_{k}^2 L_B^2\|x_{k}  - p_{k} \|^{2} 
             + 2\gamma_{k}^2 a(x_{k}) \|x_{k}  - p_{k} \|^{\mu} + 2\gamma_{k}^2 b(x_{k}) \|x_{k}  - p_{k} \|^{\theta}      + 2\gamma_{k}^2 c(x_{k}) \|x_{k}  - p_{k}  \|^{\beta} \nonumber \\
			&= 2\gamma_{k}^2 \Big(L_B^2 + a(x_{k}) \|x_{k}  - p_{k} \|^{\mu-2}  + b(x_{k}) \|x_{k}  - p_{k} \|^{\theta-2}     +   c(x_{k})  \|x_{k}  - p_{k} \|^{\beta-2}\Big)  \|x_{k}  - p_{k}  \|^2. \label{eq:18}
            \vspace{-0.2cm}
		\end{align}

        \vspace{-0.5cm}
		\noindent Using  \eqref{ass:resC} with $q = x_{k}$, $w = \hat{x}_{k-1}$, $u = Ax_{k} + Bx_{k}$, and $\gamma=\gamma_{k}$, 
        \vspace{-0.5cm}
		\begin{align}
			\|x_{k}  - p_{k} \|  &= \|\text{proj}_{\text{dom}C}(\hat{x}_{k-1}) - J_{\gamma_{k} C} z_{k}\|  \overset{\eqref{ass:resC}}{\leq} \gamma_{k}(\zeta \|Ax_k + Bx_k\| + \tau) = \gamma_{k} d(x_{k}). \label{eq:21}
        \vspace{-0.2cm}
		\end{align}
        
        \vspace{-0.5cm}
		\noindent From \eqref{eqt:stepsize}, \eqref{eq:18}, \eqref{eq:21}, and the fact that $h(\gamma_{k}) \leq  h(\bar\gamma_{k})$, for $\mu=2$, we deduce that
        \vspace{-0.5cm}
		\begin{align}
			&\gamma_{k}^2 \|Ax_{k} + Bx_{k}- Ap_{k}  - Bp_{k} \|^2 \nonumber \\
            & {\overset{\eqref{eq:18},\eqref{eq:21}}{\leq} \; }2(L_B^2\gamma_{k}^2 + a(x_{k})  \gamma_{k}^{2}   + b(x_{k}) d(x_{k})^{\theta-2} \gamma_{k}^{\theta}  +   c(x_{k})  d(x_{k})^{\beta-2} \gamma_{k}^{\beta})\|x_{k}  - p_{k}  \|^2  \nonumber\\
            &\leq 2\left( (L_B^2 + a(x_{k}))  \bar{\gamma}_{k}^{2} + b(x_{k}) d(x_{k})^{\theta-2} \bar{\gamma}_{k}^{\theta}  +   c(x_{k})  d(x_{k})^{\beta-2} \bar{\gamma}_{k}^{\beta}\right)\|x_{k}  - p_{k}  \|^2 \nonumber \\
            & {\overset{\eqref{eqt:stepsize}}{=}} \; \alpha_{k} \|x_{k} - p_{k}\|^2.  \label{eq:8} 
		\end{align}

        \vspace{-0.5cm}
		\noindent From the above inequality, the first statement holds. Moreover, from \eqref{eqt:stepsize}, since $a(x_{k})$, $b(x_{k})$, $c(x_{k})$, and $d(x_{k})$ are nonnegative for every $k\geq0$, we have
           $ 2L_B^2\bar{\gamma}_{k}^2 - \alpha_{k} \leq 0$. Since $\alpha_{k} < \alpha_{\max}$, for every $k \geq 0$, inequality \eqref{eq:19} holds. \Halmos
	\end{proof}
 
\noindent From previous examples, one can see that Stepsize Choice 1 requires the computation of a positive root of a second-order polynomial equation for quadratically constrained quadratic programs in \eqref{eq:qcqp}, while for quadratic over linear fractional programs \eqref{QuadFracPro}, one needs to compute the positive  root of  a third-order equation. More explicitly: \\
(i) If we consider the quadratically constrained quadratic program  \eqref{eq:qcqp}, then  the operator $A$ defined in \eqref{opA:QCQP} for problem \eqref{eq:qcqp} satisfies \eqref{eq:34} where, for every $i\in \{1,\ldots,m\}$ $g_i\colon x \mapsto \frac{1}{2}x^\top Q_{i}x + l^\top_{i}x -  r_{i}$. Hence, equation \eqref{eqt:stepsize} becomes: 
\vspace{-0.5cm}
\begin{align}
    b\,d(x_{k})^{2} \gamma^{4} + (\|Q_{0}\|^2 +  a(x_{k}))\gamma^2   -  \frac{\alpha_{k}}{2} =0, 
    \label{eq:stepQCQP}
\end{align}

\vspace{-0.5cm}
\noindent with the functions $a$ and $b$ given in \eqref{eq:34}.  Defining, $\hat\gamma = \gamma^2$ we have 
\vspace{-0.5cm}
\begin{align*}
    c\,d(x_{k})^{2} \hat\gamma^{2} + (\|Q_{0}\|^2 +  b(x_{k}))\hat\gamma   -  \frac{\alpha_{k}}{2} =0.
   \end{align*}

   \vspace{-0.5cm}
   \noindent Note that, the positive root of the second order equation is  
   \begin{equation*}
        \hat\gamma_{k} = \dfrac{- (\|Q_{0}\|^2 +  b(x_{k})) + \sqrt{(\|Q_{0}\|^2 +  b(x_{k}))^2 + 2\alpha_{k} c\,d(x_{k})^{2}}}{2c\,d(x_{k})^{2}}. 
    \end{equation*}
    \noindent  Then, using $\hat\gamma_{k} = \gamma^2_{k}$, we obtain that the positive root of \eqref{eq:stepQCQP} is
 \begin{align*}
     \bar{\gamma}_k = \left(\dfrac{\sqrt{(\|Q_{0}\|^2 +  b(x_{k}))^2 + 2 c\,d(x_{k})^{2} \alpha_{k} }-(\|Q_{0}\|^2 +  b(x_{k}))}{2c\, d(x_{k})^{2}}\right)^{1/2}.
 \end{align*} 
(ii) For the quadratic  fractional program \eqref{QuadFracPro}, equation \eqref{eqt:stepsize} becomes 
\vspace{-0.5cm}
\begin{align*}
c(x_{k}) d(x_{k})^{4} \gamma^{6} +  b(x_{k}) d(x_{k})^{2} \gamma^{4}  +   (a(x_{k}) + L_{B}^2) \gamma^2   -  \frac{\alpha_{k}}{2} = 0,
\end{align*}

\vspace{-0.5cm}
\noindent where functions $a$, $b$, and $c$  are given in \eqref{eq:42}. Setting $\eta = \gamma^2$, we obtain a  cubic equation  with a positive root $\eta_{k}$,  and then $\bar \gamma_{k} = \sqrt{\eta_{k}}$.


\subsubsection{Convergence results under  pseudo-monotonicity} 
	
	\noindent Next, we show the asymptotic convergence of the sequences $(x_{k})_{k\in\mathbb{N}}$ and $(p_{k})_{k\in\mathbb{N}}$ generated by AFBF
    Algorithm  when $\mu = 2$ and the stepsize is computed according to  \eqref{eq:gamma}. 
The following sequence will play a key role in our convergence analysis:
    \vspace{-0.2cm}
     \begin{equation}
(\forall k \in \mathbb{N}) \quad    
u_k =  \gamma_k^{-1} (z_{k}  - p_{k}) + Ap_{k} + Bp_{k} \in Ap_{k} + Bp_{k} + Cp_{k}. \label{eq:15}
    \vspace{-0.1cm}
    \end{equation}

	\begin{theorem}
    \label{theo:1}
		Suppose that $\operatorname{zer}(A+B+C)\neq \varnothing$ and
  Assumption \ref{ass1} holds  with $\mu = 2$. Let
  $(x_{k})_{k\in\mathbb{N}}$, $(z_{k})_{k\in\mathbb{N}}$ $(p_{k})_{k\in\mathbb{N}}$, and $(q_{k})_{k\in\mathbb{N}}$ be
  sequences  generated by AFBF algorithm with stepsizes  $(\gamma_{k})_{k\in \mathbb{N}}$  given by \eqref{eq:gamma}. Then, the following hold: 
  \begin{enumerate}[i)]
        \item $(x_k)_{k\in \mathbb{N}}$ is a Fej\`er monotone sequence with respect to $\operatorname{zer}(A+B+C)$;
		\item $\sum_{k=0}^{+\infty} \|x_{k} - p_{k}\|^2  < +\infty$ and $\sum_{k=0}^{+\infty} \|z_{k} - q_{k}\|^2  < +\infty$;
		\item  there exists $\bar{z} \in \text{zer}(A+B+C)$ such that $x_{k} \to \bar{z}$, $p_{k} \to \bar{z}$,  and $u_{k} \to 0$. 
  \end{enumerate}
	\end{theorem}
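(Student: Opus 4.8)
The plan is to derive one Fej\'er-type descent inequality and then read off all three assertions from it. First I would fix an arbitrary $\bar z \in \operatorname{zer}(A+B+C)$, which belongs to $\operatorname{dom} C$ since $C\bar z \neq \varnothing$. Using the nonexpansiveness of $\operatorname{proj}_{\operatorname{dom} C}$ (step (f) of AFBF) one has $\|x_{k+1} - \bar z\|^2 \leq \|\hat x_{k} - \bar z\|^2$. I would then write $\hat x_{k} - \bar z = (x_{k} - \bar z) - (z_{k} - q_{k})$, expand the square, and use the identity $z_{k} - q_{k} = \gamma_{k} u_{k}$ with $u_{k} \in Ap_{k} + Bp_{k} + Cp_{k}$ as in \eqref{eq:15}; the cross term becomes $-2\gamma_{k}\langle x_{k} - p_{k}, u_{k}\rangle - 2\gamma_{k}\langle p_{k} - \bar z, u_{k}\rangle$. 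The second inner product is nonnegative: since $0 \in (A+B+C)\bar z$ we have $\langle 0, p_{k} - \bar z\rangle \geq 0$, so pseudo-monotonicity of $A+B+C$ (Assumption~\ref{ass1}.\ref{ass1iv}) forces $\langle u_{k}, p_{k} - \bar z\rangle \geq 0$ for the particular element $u_{k} \in (A+B+C)p_{k}$.

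Next I would substitute $z_{k} - q_{k} = (x_{k} - p_{k}) - e_{k}$ with $e_{k} := \gamma_{k}(Ax_{k} + Bx_{k} - Ap_{k} - Bp_{k})$ (this follows directly from steps (b) and (d)); a short algebraic simplification of $-2\langle x_{k} - p_{k}, (x_{k} - p_{k}) - e_{k}\rangle + \|(x_{k} - p_{k}) - e_{k}\|^2$ collapses to $-\|x_{k} - p_{k}\|^2 + \|e_{k}\|^2$. Invoking Lemma~\ref{lem1}, i.e.\ \eqref{eq:12}, to bound $\|e_{k}\|^2 \leq \alpha_{k}\|x_{k} - p_{k}\|^2 \leq \alpha_{\max}\|x_{k} - p_{k}\|^2$, I obtain the master inequality
\[
\|x_{k+1} - \bar z\|^2 \;\leq\; \|x_{k} - \bar z\|^2 - (1 - \alpha_{\max})\,\|x_{k} - p_{k}\|^2 .
\]
Since $\alpha_{\max} < 1$, this proves (i). Telescoping the inequality over $k$ gives $\sum_{k}\|x_{k} - p_{k}\|^2 < +\infty$; and since $\|z_{k} - q_{k}\| \leq \|x_{k} - p_{k}\| + \|e_{k}\| \leq (1 + \sqrt{\alpha_{\max}})\|x_{k} - p_{k}\|$, the second series in (ii) is summable as well.

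For (iii), Fej\'er monotonicity makes $(x_{k})_{k\in\mathbb{N}}$ bounded; combined with continuity of $A$, $B$ and of $a,b,c$ on (a neighborhood of) the closure of the trajectory, the coefficients of the stepsize equation \eqref{eqt:stepsize} remain bounded, so its root $\bar\gamma_{k}$, and hence $\gamma_{k}$, is bounded away from $0$, while Lemma~\ref{lem1} keeps $\gamma_{k}$ below $\eta$. Rewriting \eqref{eq:15} as $u_{k} = \gamma_{k}^{-1}(z_{k} - q_{k})$ and using (ii) then yields $u_{k} \to 0$. To identify cluster points, let $x_{k_{j}} \to \bar x$; then $p_{k_{j}} \to \bar x$ because $\|x_{k} - p_{k}\| \to 0$, and from $u_{k_{j}} - Ap_{k_{j}} - Bp_{k_{j}} \in Cp_{k_{j}}$ together with continuity of $A,B$, $u_{k_{j}} \to 0$, and the sequential closedness of the graph of the maximally monotone operator $C$, we get $-A\bar x - B\bar x \in C\bar x$, i.e.\ $\bar x \in \operatorname{zer}(A+B+C)$. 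A standard Fej\'er-monotone argument (each $\|x_{k} - z\|$ converges for $z \in \operatorname{zer}(A+B+C)$, so any two cluster points must coincide) then produces a single limit $\bar z \in \operatorname{zer}(A+B+C)$ with $x_{k} \to \bar z$, whence also $p_{k} \to \bar z$. The step I expect to require the most care is the uniform lower bound on $\gamma_{k}$: it is the one place where boundedness of the iterates feeds back into the stepsize rule, and uniformity across $k$ must be extracted from the continuity of the data and the monotone structure of the function $h$ in \eqref{eqt:stepsize}; the rest is bookkeeping around the master inequality.
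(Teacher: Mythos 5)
Your proof is correct and follows essentially the same route as the paper's: the same master inequality \eqref{e:17} obtained from nonexpansiveness of the projection, the identity $x_k-\hat{x}_k=z_k-q_k=\gamma_k u_k$, pseudo-monotonicity giving $\langle u_k,p_k-\bar z\rangle\geq 0$, and Lemma~\ref{lem1}; then the same lower bound on $\gamma_k$ and the same Fej\`er/graph-closedness argument for part (iii). The only difference is cosmetic — you expand $\|\hat{x}_k-\bar z\|^2$ via $\hat{x}_k-\bar z=(x_k-\bar z)-(z_k-q_k)$ rather than the paper's decomposition of $\|x_k-\bar z\|^2$, which collapses to the identical inequality.
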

	
	\begin{proof}{Proof}
 i) Let $k\in \mathbb{N}$ and let $\bar{z} \in \text{zer}(A+B+C)$. Then,
    \vspace{-0.5cm}
		\begin{align*}
			&\|x_{k} - \bar{z}\|^2 = \|x_{k} - p_{k} + p_{k} - \bar{z}\|^2 \\
			& = \|x_{k} - p_{k}\|^2 + 2 \langle x_{k} - p_{k}, p_{k} - \bar{z} \rangle + \| p_{k}  - \hat{x}_{k} + \hat{x}_{k} - \bar{z}\|^2 \\
			& = \|x_{k} - p_{k}\|^2  + \|p_{k}  - \hat{x}_{k}\|^2 + \|\hat{x}_{k} - \bar{z}\|^2 + 2 \langle x_{k} - p_{k}, p_{k} - \bar{z} \rangle  + 2 \langle p_{k} - \hat{x}_{k}, \hat{x}_{k} - \bar{z} \rangle \\
			& = \|x_{k} - p_{k}\|^2  - \|p_{k}  - \hat{x}_{k}\|^2 + \|\hat{x}_{k} - \bar{z}\|^2 + 2 \langle x_{k} - p_{k}, p_{k} - \bar{z} \rangle  + 2 \langle p_{k} - \hat{x}_{k}, p_{k} - \bar{z} \rangle \\
			& = \|x_{k} - p_{k}\|^2  - \|p_{k}  - \hat{x}_{k}\|^2 + \|\hat{x}_{k} - \bar{z}\|^2  + 2 \langle x_{k} - \hat{x}_{k}, p_{k} - \bar{z} \rangle. \vspace{-0.2cm}
		\end{align*}

        \vspace{-0.5cm}
		\noindent Moreover, using $p_{k}  - \hat{x}_{k} = \gamma_{k} \left( Ap_{k} + Bp_{k} - Ax_{k} - Bx_{k}\right) $, we deduce that
        \vspace{-0.1cm}
		\begin{equation}
			\|x_{k} - \bar{z}\|^2 
			= \|x_{k} - p_{k}\|^2  - \gamma_{k}^2\| Ap_{k} + Bp_{k} - Ax_{k} - Bx_{k} \|^2 + \|\hat{x}_{k}- \bar{z}\|^2 + 2 \langle x_{k} - \hat{x}_{k}, p_{k} - \bar{z} \rangle.\label{eq:40}
        \vspace{-0.1cm}
		\end{equation}
        
		\noindent Note that $\bar{z} \in \text{dom}C$. Using the nonexpansiveness of the projection, \eqref{eq:40} yields
        \vspace{-0.5cm}
		\begin{align}
			\|x_{k+1} - \bar{z}\|^2 &\leq \|\hat{x}_{k}- \bar{z}\|^2  \nonumber \\ & \overset{\eqref{eq:40}}{ =} 	\|x_{k} - \bar{z}\|^2 
			- \|x_{k} - p_{k}\|^2  + \gamma_{k}^2\|Ap_{k} + Bp_{k}  - Ax_{k} - Bx_{k}\|^2 - 2 \langle x_{k} - \hat{x}_{k}, p_{k} - \bar{z}\rangle.\label{eq:13} \vspace{-0.2cm}
		\end{align}
        
        \vspace{-0.5cm}
        \noindent  We deduce from Lemma \ref{lem1} that
          \vspace{-0.1cm}
		\begin{equation}
			\|x_{k+1} - \bar{z}\|^2 
			\leq \|x_{k}- \bar{z}\|^2  - (1-\alpha_k)
            \|x_{k} - p_{k}\|^2- 2 \langle x_{k} - \hat{x}_{k}, p_{k} - \bar{z}\rangle.\label{eq:4}
            \vspace{-0.1cm}
		\end{equation}
		\noindent Since $z_k \in (\operatorname{Id} + \gamma_{k}C)p_{k}$, the inclusion relation in
        \eqref{eq:15} holds and
        \vspace{-0.1cm}
		\begin{equation}\label{e:defuk}
			x_{k} -\hat{x}_{k} = \gamma_k u_k.
        \vspace{-0.1cm}
		\end{equation}
    \noindent Since $A+B+C$ is pseudo-monotone
  and $\bar{z}$ is a zero of $A+B+C$, we obtain:
  \vspace{-0.2cm}
		\begin{equation}
			\langle u_k, p_{k} - \bar{z} \rangle \geq 0.\label{e:pseudnormconv1}
        \vspace{-0.1cm}
		\end{equation}

		\noindent Using the last inequality with \eqref{e:defuk} and  $\alpha_{k} \leq \alpha_{\max}<1$, it follows from \eqref{eq:4} that
  \vspace{-0.1cm}
		\begin{equation}
			\|x_{k+1} - \bar{z}\|^2 \leq \|x_{k} - \bar{z}\|^2  -   (1-\alpha_{\max}) \|x_{k} - p_{k}\|^2 \leq \|x_{k} - \bar{z}\|^2. \label{e:17} 
    \vspace{-0.1cm}
		\end{equation}
  This shows that $(x_k)_{k\in \mathbb{N}}$ is a Fej\`er monotone sequence w.r.t.  $\text{zer}(A+B+C)$.\\
  
\noindent ii) Since $(x_k)_{k\in \mathbb{N}}$ is a Fej\`er monotone  sequence, then it is bounded and 
  \eqref{e:17} yields
    \vspace{-0.2cm}
		\begin{equation}
			(1-\alpha_{\max}) \sum_{j=0}^{k} \|x_{j} - p_{j}\|^2 \leq \|x_{0} - \bar{z}\|^2 < + \infty. \label{eq:10}
        \vspace{-0.2cm}
		\end{equation}
		
		\noindent It follows that $(p_{k})_{k\in \mathbb{N}}$ is also bounded. 
        In addition, by using Steps 2.(b) and 2.(e) of  AFBF algorithm, Lemma \ref{lem1}, Cauchy-Schwarz inequality, and the fact that $\alpha_{k}\leq \alpha_{\max}$, we further get
        \vspace{-0.5cm}
		\begin{align}
			\|z_{k} - q_{k}\|^2 \;\; & {\overset{2.(e),\eqref{e:defuk}}{=}} \;\; \gamma_k^2\|u_k\|^2 \;\; {\overset{2.(b),\eqref{eq:15}}{=}} \;\; \|x_{k} - p_{k} + \gamma_{k}(Ap_{k}  + Bp_{k}  - Ax_{k} - Bx_{k} )\|^2  \nonumber \\
            & \quad =\|x_{k} - p_{k}\|^2
            +2 \gamma_k \langle x_{k} - p_{k},
            Ap_{k}  + Bp_{k}  - Ax_{k} - Bx_{k}\rangle\nonumber\\
            &\qquad + \gamma_k^2 \| Ap_{k}  + Bp_{k}  - Ax_{k} - Bx_{k}\|^2\nonumber\\
            & \quad\le \|x_{k} - p_{k}\|^2
            +2 \gamma_k \|x_{k} - p_{k}\|
            \|Ap_{k}  + Bp_{k}  - Ax_{k} - Bx_{k}\|\nonumber\\
            &\qquad + \gamma_k^2 \| Ap_{k}  + Bp_{k}  - Ax_{k} - Bx_{k}\|^2\nonumber\\
            & \quad \le (1+\sqrt{\alpha}_k)^2 \|x_{k} - p_{k}\|^2.
            \label{eq:9bis}
            \vspace{-0.2cm}
		\end{align}

		\vspace{-0.5cm}
		\noindent As a consequence of \eqref{eq:10} and the boundedness of $(\alpha_k)_{k\in \mathbb{N}}$,   $\sum_{k=0}^{+\infty} \|z_{k} - q_{k}\|^2  < + \infty$. \\
\noindent iii) Let $u_k$ be defined by  \eqref{eq:15}.
        According to \eqref{eq:9bis},
 since $\alpha_k \in ]0,1[$, 
        \vspace{-0.1cm}
		\begin{equation}
			\|u_{k}\| 
			\leq \left( \gamma_{k}^{-1} \right) (1+\sqrt{\alpha_k})\|x_{k} - p_{k}\| \leq 2 \gamma_{k}^{-1} \|x_{k} - p_{k}\|.  \label{eq:14}
    \vspace{-0.1cm}
		\end{equation}
		
	\noindent On other hand, from the definition of $\gamma_{k}$ 
 and Lemma \ref{lem1}, it follows that
 \vspace{-0.5cm}
    \begin{align}
	&\alpha_{\min} \leq \alpha_{k} \label{eq:gammahat} \\
    &= 2\bar\gamma_{k}^2 \left(L_B^2 + a(x_{k})  + b(x_{k}) d(x_{k})^{\theta-2} \bar\gamma_{k}^{\theta-2}  +   c(x_{k})  d(x_{k})^{\beta-2}\bar\gamma_{k}^{\beta-2} \right) \nonumber \\
    &{\overset{\eqref{eq:19}}{\leq}} 2\bar\gamma_{k}^2 \left(L_B^2 + a(x_{k})  + b(x_{k}) d(x_{k})^{\theta-2} \eta^{\theta-2}  +   c(x_{k})  d(x_{k})^{\beta-2}\eta^{\beta-2} \right).  
    \nonumber
    \vspace{-0.1cm}
    \end{align}

    \vspace{-0.5cm}
    \noindent Since $A$, $B$, $a$, $b$, and $c$ are continuous 
    on $\operatorname{dom} C$ and, $(x_{k})_{k\in \mathbb{N}}$ and $(\gamma_{k})_{k\in \mathbb{N}}$ are bounded, then $(d_k)_{k\in \mathbb{N}}$ is bounded and
    there exist $(R_{1},R_{2},R_{3}) \in ]0,+\infty[^3$ such that 
    \vspace{-0.1cm}
    \begin{equation*}
	a(x_{k})  \leq R_{1}, \quad b(x_{k})d(x_{k})^{\theta-2}  \leq R_{2}, \quad \text{and} \quad c(x_{k})d(x_{k})^{\beta-2}  \leq R_{3}.
    \vspace{-0.1cm}
    \end{equation*}

    \noindent This allows us to lower-bound $\gamma_k$ as follows:
    \vspace{-0.1cm}
    \begin{equation}
	\gamma_{k} \geq \gamma_{\min} := \min \left\lbrace \sigma , \sqrt{\dfrac{\alpha_{\min}}{2\left( L_{B}^2 +R_1   + R_2 \eta^{\theta-2}   + R_{3} \eta^{\beta-2}\right)} }\right\rbrace   . \label{eq:5}
    \vspace{-0.1cm}
    \end{equation}
		
	\noindent Hence, from \eqref{eq:14}, we deduce that
    \vspace{-0.2cm}
	\begin{equation}
			\|u_{k}\| \leq 2 \gamma_{\min}^{-1}      \|x_{k}  - p_{k} \|. \label{eq:20}
   \vspace{-0.2cm}
	\end{equation}
	
	\noindent As \eqref{eq:10} implies that $x_{k}  - p_{k}  \to 0$, we have
		\vspace{-0.1cm}
        \begin{equation}
			u_{k} \to 0. \label{eq:16}
        \vspace{-0.1cm}
		\end{equation}

        \noindent To prove the convergence of $(x_k)_{k\in \mathbb{N}}$, according to the Fej\`er-monotone convergence theorem \cite[Lemma 6]{Bro:67}, applied in our finite dimensional setting, where strong and weak convergences are equivalent, it is sufficient to show that every sequential cluster point of $(x_{k})_{k\in\mathbb{N}}$ is a zero of $A+B+C$.
		Let $w$ be such a sequential cluster point. There thus exists a subsequence $(x_{k_{n}})_{n\in \mathbb{N}}$ of $(x_k)_{k\in \mathbb{N}}$ such that $x_{k_{n}} \to w$. It follows 
        from \eqref{eq:10} and \eqref{eq:16} that
        \vspace{-0.2cm}
		\begin{equation*}
			p_{k_{n}} \to w \quad \text{and} \quad u_{k_{n}} \to 0
        \vspace{-0.2cm}
		\end{equation*}

        \noindent Since $A$ and $B$ are continuous operators on $\operatorname{dom} C$, $u_{k_{n}} - Ap_{k_{n}} - B p_{k_{n}} \to -Aw-Bw$.  It follows  from \eqref{eq:15} that $(p_{k_n}, u_{k_{n}}- Ap_{k_{n}} - B p_{k_{n}})$ lies in $\operatorname{gra}C$.
		\noindent Maximally monotonicity of  $C$ implies that $ (w,-Aw-Bw) \in \operatorname{gra}C$ \cite[Proposition 20.38(iii)]{BauCom:11}. 
        Thus, $w\in \operatorname{zer}(A+B+C)$.
        Hence $x_k\to w$ and, since $x_k-p_k \to 0$, $(p_k)_{k\in \mathbb{N}}$ has the same limit.  This concludes our proof. \Halmos 
	\end{proof}

\noindent In \cite{AlaKimWri:24,CaiZhe:22,Pet:22}, the problem of finding a zero of the sum of two operators $B$ and $C$ is considered when $B$ is Lipschitz continuous, $C$ is maximally monotone, and $B + C$ satisfies the weak Minty condition. Next, we analyze the case when we replace the pseudo-monotonicity assumption with the weak Minty condition. Let us first recall this condition. 
\begin{definition}
\noindent  An operator $T:\mathbb{H} \to 2^{\mathbb{H}}$ satisfies the weak Minty condition on $\mathcal{Z} \subset \mathbb{H}$ if there
exists some $\rho \ge 0$ such that the following holds:
\vspace*{-0.2cm}
\begin{equation}
    \langle \hat{w}, w - z \rangle \geq -\rho \|\hat{w}\|^2 \quad \text{for every} \;\; z\in \mathcal{Z}, \, w\in \mathbb{H}, \;\, \text{and} \;\,  \hat{w}\in Tw. \label{eq:WeakMinty}
    \vspace*{-0.1cm}
\end{equation}
\end{definition}
Note that pseudo-monotone operators (see Definition \ref{def:pseudo-mon}) satisfy  the weak Minty condition on their set of zeros $\mathcal{Z}$ with $\rho = 0$.  Weak Minty condition covers, in particular, minimization problems having star-convex or quasar-convex differentiable  objective functions  \cite{HinSidSoh:20}.

\begin{remark}\ 
\begin{enumerate}[i)]
\item First, one can notice  that our proof works with a  weak Minty type condition, where  $\mathcal{Z} = \operatorname{zer}(A+B+C)$ and $\rho=0$,  instead of  Assumption \ref{ass1}.\ref{ass1iv}. 
Indeed, in the proof of Theorem \ref{theo:1}, the 
pseudo-monotonicity of $A+B+C$ has been used to 
derive inequality \eqref{e:pseudnormconv1},
which can also be derived  from the weak Minty condition with $\rho=0$.
\item 
Second, let us replace the pseudo-monotone condition in Assumption \ref{ass1}.\ref{ass1iv} with the assumption that
     $A+B+C$ satisfies the weak Minty condition on $\operatorname{zer}(A+B+C)$ with $\rho>0$ and, additionally,  assume  $\operatorname{dom} C$ bounded. From the continuity of $A$, $B$, $a$, $b$, and $c$, and the boundedness of $\operatorname{dom} C$, there exists $(R_{a},R_{b},R_{c}) \in ]0,+\infty[^3$ such that, for every $z \in \operatorname{dom} C$, 
    \begin{equation}
      a(z) \leq R_{a}, \,\ b(z)d(z) \leq R_{b}, \,\ \text{and} \,\   c(z)d(z) \leq R_{c}. \label{eq:bound}
    \end{equation}
     Then, the results from the last theorem hold as long as the following conditions are satisfied: 
     \vspace*{-0.1cm}
    \begin{equation}
         \rho <  \dfrac{2^{-\frac{3}{2}}\sqrt{\alpha_{\min}} (1-\sqrt{\alpha_{\max}})}{(1+\sqrt{\alpha_{\max}}) \sqrt{L_{B}^2 + R_{a}+ R_{b}\eta^{\theta-2} + R_{c}\eta^{\beta -2}}} \; := \rho_{\max}
         \label{eq:condition},
         \vspace*{-0.1cm}
     \end{equation}
    and either
    \begin{equation}
        \sigma \geq \sqrt{\dfrac{\alpha_{\min}}{2\left( L_{B}^2 + R_a    + R_b \eta^{\theta-2}   + R_{c} \eta^{\beta-2}\right)}}, \,\ \text{or} \,\ (\forall k \geq 0)\;\gamma_{k} = \bar\gamma_{k}.  \label{cond:stepsize}
    \end{equation}
     Indeed, from \eqref{eq:4}, \eqref{e:defuk} and \eqref{eq:WeakMinty},
        \begin{equation*}
        (\forall k \in \mathbb{N})\quad 
			\|x_{k+1} - \bar{z}\|^2 
			\leq \|x_{k}- \bar{z}\|^2 - (1-\alpha_{k})\|x_{k} - p_{k}\|^2 + 2 \gamma_k 
   \rho \|u_{k}\|^2. 
   \end{equation*}
   
    Using \eqref{eq:14}, we obtain
    \begin{equation}
			\|x_{k+1} \!- \bar{z}\|^2 
			\leq \|x_{k}- \bar{z}\|^2 \!-\! (1-\alpha_{k}  - 2 \gamma_{k}^{-1} \rho (1 \!+\! \sqrt{\alpha_{k}})^2) \|x_{k} \!- p_{k}\|^2 . \label{ineq1}
	\end{equation}
    On other hand, \eqref{eq:gamma}, \eqref{eq:gammahat},
    \eqref{eq:bound}, \eqref{eq:condition}, and \eqref{cond:stepsize} yield
    \vspace{-0.5cm}
    \begin{align*}
	\gamma_{k} \geq \min \left\lbrace \sigma , \bar{\gamma_{k}} \right\rbrace  &\geq \sqrt{\dfrac{\alpha_{\min}}{2\left( L_{B}^2+ R_a + R_b \eta^{\theta-2}   + R_{c} \eta^{\beta-2}\right)} } \\ &\geq \dfrac{2 \rho_{\max} (1+\sqrt{\alpha_{\max}})}{1 - \sqrt{\alpha_{\max}}}  \geq \dfrac{2 \rho_{\max} (1+\sqrt{\alpha_{k}})}{1 - \sqrt{\alpha_{k}}}.
    \end{align*}

  \vspace{-0.5cm}  
  \noindent Hence, it follows that 
  \vspace*{-0.1cm}
  \begin{equation*}
    1-\alpha_{k}  - 2 \gamma_{k}^{-1} \rho (1+\sqrt{\alpha_{k}})^2  \geq \left(1 - \dfrac{\rho}{\rho_{\max}}\right) (1-\alpha_{k}) \geq \left(1 - \dfrac{\rho}{\rho_{\max}}\right) (1-\alpha_{\min}).
    \vspace*{-0.2cm}
\end{equation*}
The inequality above and \eqref{ineq1} show that $(x_k)_{k\in \mathbb{N}}$ is a Fej\`er monotone sequence with respect to $\text{zer}(A+B+C)$ and
$\sum_{k=0}^{+\infty} \|x_k-p_k\|^2<+\infty$.
By proceeding similarly to the proof of Theorem \ref{theo:1}, the convergence of $(x_k)_{k\in \mathbb{N}}$ to a
zero of $A+B+C$ can be proved. 
\end{enumerate}
\end{remark}

\noindent Now, we show a sublinear convergence rate result for the iterates of AFBF algorithm. 
    \begin{theorem}
    \label{theo:2}
    Under the same assumptions as in Theorem \ref{theo:1},
the following hold: for every $k_0\in \mathbb{N}$
and $ k\in \mathbb{N}^*$,
\vspace{-0.5cm}
		\begin{align*}
 {\gamma_{\min} 
         \min_{k_0\le j\le k_0+k-1}\|u_j\|}
         &\le 
\min_{k_0\le j\le k_0+k-1}\|x_j-\hat{x}_j\|\\
&\le (1+\sqrt{\alpha_{\max}})  \min_{k_0\le j\le k_0+k-1}\|x_j-p_j\| \le \frac{\varepsilon_{k_0}}{\sqrt{k}},
		\end{align*}

\vspace{-0.5cm}
 \noindent where $u_k$ is defined in \eqref{eq:15} and  $\varepsilon_{k_0} \to 0$ as $k_0 \to +\infty$.
	\end{theorem}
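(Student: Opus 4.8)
\textit{Proof proposal.}
The plan is to establish the chain of three inequalities from left to right, each being a direct consequence of facts already available in Lemma~\ref{lem1} and Theorem~\ref{theo:1}, and then to telescope the Fej\'er-type inequality \eqref{e:17} to obtain the $1/\sqrt{k}$ decay. Throughout, fix $k_0\in\mathbb{N}$ and $k\in\mathbb{N}^*$ and note that whenever $a_j\le b_j$ holds for every index $j$ in the range $k_0\le j\le k_0+k-1$, one has $\min_j a_j\le\min_j b_j$ (evaluating the left-hand side at the minimizer of the right-hand side); so each of the three steps may be carried out index-wise before taking minima.

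For the leftmost inequality I would invoke \eqref{e:defuk}, which yields $x_j-\hat{x}_j=\gamma_j u_j$, together with the lower bound $\gamma_j\ge\gamma_{\min}$ from \eqref{eq:5}; hence $\gamma_{\min}\|u_j\|\le\gamma_j\|u_j\|=\|x_j-\hat{x}_j\|$ for each $j$. For the middle inequality, Step 2.(e) of AFBF gives $\hat{x}_j=q_j-z_j+x_j$, i.e.\ $x_j-\hat{x}_j=z_j-q_j$, and \eqref{eq:9bis} gives $\|z_j-q_j\|\le(1+\sqrt{\alpha_j})\|x_j-p_j\|\le(1+\sqrt{\alpha_{\max}})\|x_j-p_j\|$. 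Combining these two estimates gives the first two inequalities of the claimed chain.

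The substantive step is the rightmost inequality. I would first bound the minimum by the average,
\[
\min_{k_0\le j\le k_0+k-1}\|x_j-p_j\|^2\le\frac{1}{k}\sum_{j=k_0}^{k_0+k-1}\|x_j-p_j\|^2,
\]
and then telescope \eqref{e:17}, valid for every index, to get
\[
(1-\alpha_{\max})\sum_{j=k_0}^{k_0+k-1}\|x_j-p_j\|^2\le\|x_{k_0}-\bar z\|^2-\|x_{k_0+k}-\bar z\|^2\le\|x_{k_0}-\bar z\|^2,
\]
where $\bar z\in\operatorname{zer}(A+B+C)$ is the specific limit point of $(x_k)_{k\in\mathbb{N}}$ provided by Theorem~\ref{theo:1}(iii). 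Putting the two displays together and taking square roots yields
\[
(1+\sqrt{\alpha_{\max}})\min_{k_0\le j\le k_0+k-1}\|x_j-p_j\|\le\frac{1+\sqrt{\alpha_{\max}}}{\sqrt{1-\alpha_{\max}}}\cdot\frac{\|x_{k_0}-\bar z\|}{\sqrt{k}},
\]
so the stated bound holds with $\varepsilon_{k_0}:=\dfrac{1+\sqrt{\alpha_{\max}}}{\sqrt{1-\alpha_{\max}}}\,\|x_{k_0}-\bar z\|$, which is independent of $k$. Finally, since $x_{k_0}\to\bar z$ as $k_0\to+\infty$ by Theorem~\ref{theo:1}(iii), one concludes $\varepsilon_{k_0}\to 0$. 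I do not expect a genuine obstacle here; the only points that need a little care are the choice of $\bar z$ as the actual limit of $(x_k)$ (so that $\varepsilon_{k_0}\to 0$ rather than merely staying bounded), the independence of $\varepsilon_{k_0}$ from $k$, and the index-wise justification of each min-over-$j$ step.
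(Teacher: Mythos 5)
Your proof is correct and follows essentially the same route as the paper's: the first two inequalities come index-wise from \eqref{e:defuk}, \eqref{eq:5}, and \eqref{eq:9bis}/\eqref{eq:14}, and the rightmost bound is obtained by telescoping \eqref{e:17} with $\bar z$ taken as the limit of $(x_k)_{k\in\mathbb{N}}$ so that $\varepsilon_{k_0}\to 0$. Your explicit constant $\varepsilon_{k_0}=\frac{1+\sqrt{\alpha_{\max}}}{\sqrt{1-\alpha_{\max}}}\|x_{k_0}-\bar z\|$ is in fact slightly more careful than the paper's, which omits the harmless factor $1+\sqrt{\alpha_{\max}}$.
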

    
    \begin{proof}{Proof}
According to \eqref{e:defuk}, \eqref{eq:14}, and \eqref{eq:5},
\begin{equation}\label{e:majxjxhjxjpj}
(\forall j\in \mathbb{N})\qquad
\gamma_{\min} \|u_j\|\le 
\|x_j-\hat{x}_j\| \le (1+\sqrt{\alpha_{\max}})
\|x_j-p_j\|.
\end{equation}
Let $\overline{z}$ be the limit of $(x_j)_{j\in \mathbb{N}}$.
It follows from \eqref{e:17} that
\vspace*{-0.2cm}
\begin{equation*}
(1-\alpha_{\max}) \sum_{j=k_0}^{k_0+k-1} \|x_j-p_j\|^2
\leq \|x_{k_0}-\overline{z}\|^2,
\vspace*{-0.2cm}
\end{equation*}
which leads to
\vspace*{-0.2cm}
\begin{equation*}
\min_{k_0\leq j \leq k_0+k-1} \|x_j-p_j\|^2
\le \frac{1}{(1-\alpha_{\max})k}\, \|x_{k_0}-\overline{z}\|^2.
\end{equation*}
The result follows from the latter equation and \eqref{e:majxjxhjxjpj}, by setting
\vspace*{-0.1cm}
\begin{equation*}
\varepsilon_{k_0} = \frac{1}{\sqrt{1-\alpha_{\max}}} \|x_{k_0}-\overline{z}\|.
\end{equation*} 
Hence, the statement follows. \Halmos 
    \end{proof}

\noindent Note that convergence results in Theorems  \ref{theo:1} and \ref{theo:2} are consistent with those obtained in the literature on (non)monotone inclusion problems  \cite{BriDav:18,ComPes:12,DavWot:17,JiaVan:22,Tse:00}. 


\subsubsection{Convergence results under uniform pseudo-monotonicity}
   \noindent In this section, we refine our convergence results when the operator $A+B+C$ is uniformly pseudo-monotone. Next, we present the definition of a uniformly monotone/pseudo-monotone operator. 

    \begin{definition}
    Let $T\colon \mathbb{H}\to 2^{\mathbb{H}}$.
    \begin{enumerate}[i)]
    \item $T$ is said to be uniformly monotone
    with modulus $q\geq1$ if there exists a constant $\nu >0$ such that, 
    for every $(x,y)\in \mathbb{H}^2$ and $(\hat{x},\hat{y})\in Tx \times Ty$,
    \vspace*{-0.1cm}
        \begin{equation*}
            \langle \hat{x} - \hat{y}, x-y\rangle \geq \dfrac{\nu}{2} \|x-y\|^q.
         \vspace*{-0.1cm}   
        \end{equation*}  
    \item $T$ is said to be uniformly pseudo-monotone with modulus $q\geq1$ if there exists a constant $\nu>0$ such that,
    for every $(x,y) \in \mathbb{H}$ and $(\hat{x},\hat{y}) \in  Tx\times Ty$,
    \vspace*{-0.1cm}
    \begin{equation*}
        \langle \hat{x}, y-x \rangle \geq 0 \quad \Longrightarrow \langle \hat{y}, y-x \rangle \geq \dfrac{\nu}{2} \|x-y\|^q .
     \vspace*{-0.1cm}   
    \end{equation*}
    \end{enumerate}
\end{definition}

\noindent When $q=2$ in the definition above, we say that operator $T$ is strongly monotone / pseudo-monotone. Note that, if $T$ is uniformly monotone, then $T$ is also uniformly pseudo-monotone.

\begin{example}
    Consider a proper uniformly convex function $f\colon \mathbb{R}^n \to ]-\infty,+\infty[$. The subdifferential $\partial f$ of $f$ is uniformly monotone \cite[Example 22.5]{BauCom:11}
\end{example}

  \noindent  Below we give an example of a strongly pseudo-monotone map that is not monotone.
  \begin{example}
      Consider the unit ball $U = \{x \in \mathbb{R}^{n}\mid \|x\| \leq 1 \}$ and the map $F\colon U\setminus\{0\} \to \mathbb{R}^{n}$ such that
      \vspace*{-0.1cm}
    \begin{equation*}
    (\forall x \in U\setminus\{0\})\quad 
	F(x) = \left(  \dfrac{2}{\|x\|} - 1 \right) x. 
    \vspace*{-0.1cm}
    \end{equation*}

	\noindent Note that $F$ is not monotone on $U\setminus\{0\}$. For example, setting $y=(1,0,\ldots,0)$ and $w = (1/2,0,\ldots,0)$ yields
    \vspace*{-0.2cm}
	\begin{equation*}
		\langle F(y) - F(w), y - w \rangle = - \dfrac{1}{4}.
    \vspace*{-0.1cm}
	\end{equation*}
	 However, $F$ is strongly pseudo-monotone on $U\setminus\{0\}$. Indeed, for every $(x,y) \in (U\setminus\{0\})^2$, if $\langle F(x), y-x\rangle \geq 0$, then $\langle x, y -x \rangle \geq 0$,
  and consequently:
  \vspace*{-0.1cm}
	\begin{equation*}
		\langle F(y),y-x\rangle = (2\|y\|^{-1}  -1 )\langle y, y-x \rangle   \geq (2\|y\|^{-1} -1 ) \langle y - x, y-x \rangle \geq \|y-x\|^{2}.
    \vspace*{-0.1cm}
	\end{equation*}
  \end{example}
  
 \noindent Next, considering operators $A$, $B$, $C$ satisfying Assumption \ref{ass1} with $\mu = 2$ and stepsizes $(\gamma_{k})_{k\in \mathbb{N}}$ computed as in \eqref{eq:gamma}, we derive linear convergence rates when $A+B+C$ is uniformly pseudo-monotone with modulus $q \in [1,2]$, and sublinear rates when $q>2$. 
 
	\begin{theorem}
    \label{theo3}
		Suppose that Assumption \ref{ass1} holds with $\mu = 2$. Let $(x_{k})_{k\in\mathbb{N}}$, 
  and $(p_{k})_{k\in\mathbb{N}}$ be sequences generated by AFBF algorithm with stepsizes  $(\gamma_{k})_{k\in \mathbb{N}}$  given by \eqref{eq:gamma}. Assume that $A+B+C$ is uniformly pseudo-monotone with modulus $q\ge 1$ and constant $\nu>0$. 
Then, 
  for some $\bar{z} \in \text{zer}(A+B+C)$ and constants
  \vspace*{-0.1cm}
    \begin{equation}\label{def:R}
     R =\sup_{k\in \mathbb{N}}\|p_{k} - \bar{z}\|<+\infty \quad \text{and} \quad r = \min \{ 1-\alpha_{\max}, \gamma_{\min} \nu R^{q-2} \} < 1, 
    \vspace*{-0.1cm}
\end{equation}  
 the following hold:
  \begin{enumerate}[i)]
        \item For $q \in [1,2]$, $x_k$ converges to $\bar z$ linearly:
        \vspace*{-0.2cm}
		\begin{equation}
  (\forall k \in \mathbb{N})\quad 
			\|x_{k} - \bar{z}\| \leq \left(1
			- \frac{r}{2}  \right)^{k/2} \|x_{0} - \bar{z}\|.
        \vspace*{-0.2cm}
		\end{equation}

        \item For $q>2$ and  $\overline{r} = \dfrac{r}{2^{q-1}R^{q-2}}$, $x_k$ converges to $\bar z$ sublinearly: 
        \vspace*{-0.2cm}
        \begin{align*}
        (\forall k \in \mathbb{N})\quad 
		\|x_{k} - \bar{z}\| \leq \dfrac{ \|x_{0} - \bar{z}\| }{\left( \frac{q-2}{2} \overline{r}\|x_{0} - \bar{z}\|^{q-2}     k+1 \right)^{\frac{1}{q-2}}}.
       \vspace*{-0.2cm}
		\end{align*}
  \end{enumerate}

	\end{theorem}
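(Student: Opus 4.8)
The plan is to revisit the one-step estimate \eqref{eq:4} obtained in the proof of Theorem~\ref{theo:1} and to strengthen it using uniform pseudo-monotonicity instead of plain pseudo-monotonicity. Recall from \eqref{eq:4} and \eqref{e:defuk} that, for every $k\in\mathbb{N}$ and every $\bar z\in\operatorname{zer}(A+B+C)$,
\begin{equation*}
\|x_{k+1}-\bar z\|^2\le\|x_k-\bar z\|^2-(1-\alpha_k)\|x_k-p_k\|^2-2\gamma_k\langle u_k,p_k-\bar z\rangle,
\end{equation*}
with $u_k\in Ap_k+Bp_k+Cp_k$. Since $0\in A\bar z+B\bar z+C\bar z$ and $\langle 0,p_k-\bar z\rangle\ge 0$, applying the definition of uniform pseudo-monotonicity (with modulus $q$ and constant $\nu$) to the pair $(\bar z,p_k)$ gives $\langle u_k,p_k-\bar z\rangle\ge\frac{\nu}{2}\|p_k-\bar z\|^q$. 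Combining this with $\alpha_k\le\alpha_{\max}$ and the lower bound $\gamma_k\ge\gamma_{\min}$ from \eqref{eq:5}, I obtain the basic recursion
\begin{equation}\label{e:basicrec}
\|x_{k+1}-\bar z\|^2\le\|x_k-\bar z\|^2-(1-\alpha_{\max})\|x_k-p_k\|^2-\gamma_{\min}\nu\|p_k-\bar z\|^q.
\end{equation}
Boundedness of $(x_k)_{k\in\mathbb{N}}$ and $(p_k)_{k\in\mathbb{N}}$, established in Theorem~\ref{theo:1}, makes $R$ finite, and $r<1$ because $r\le 1-\alpha_{\max}<1$.

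For part~i), when $q\in[1,2]$ I would convert the last term of \eqref{e:basicrec} into a quadratic one: since $\|p_k-\bar z\|\le R$ and $q-2\le 0$, one has $\|p_k-\bar z\|^q=\|p_k-\bar z\|^{q-2}\|p_k-\bar z\|^2\ge R^{q-2}\|p_k-\bar z\|^2$, so the right-hand side of \eqref{e:basicrec} is at most $\|x_k-\bar z\|^2-r\big(\|x_k-p_k\|^2+\|p_k-\bar z\|^2\big)$. Bounding $\|x_k-\bar z\|^2\le 2\|x_k-p_k\|^2+2\|p_k-\bar z\|^2$ then yields $\|x_{k+1}-\bar z\|^2\le(1-r/2)\|x_k-\bar z\|^2$, and iterating from $k=0$ produces the stated linear rate (note $1-r/2\in(0,1)$ since $0<r<1$).

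For part~ii), when $q>2$ the previous conversion fails (indeed $\|p_k-\bar z\|\to 0$), so I would instead target a scalar recursion of the form $a_{k+1}\le a_k-\overline r\,a_k^{q/2}$ with $a_k=\|x_k-\bar z\|^2$. Using the convexity inequality $(s+t)^{q/2}\le 2^{q/2-1}(s^{q/2}+t^{q/2})$ with $s=\|x_k-p_k\|^2$, $t=\|p_k-\bar z\|^2$, together with $\|x_k-\bar z\|^2\le 2(s+t)$, gives $\|x_k-\bar z\|^q\le 2^{q-1}\big(\|x_k-p_k\|^q+\|p_k-\bar z\|^q\big)$. Since $(x_k)$ and $(p_k)$ are bounded, $\|x_k-p_k\|\le R$, hence $\|x_k-p_k\|^2\ge R^{2-q}\|x_k-p_k\|^q$; plugging this into \eqref{e:basicrec} shows that the two nonpositive terms there dominate $\overline r\,\|x_k-\bar z\|^q$ with $\overline r=2^{-(q-1)}\min\{(1-\alpha_{\max})R^{2-q},\gamma_{\min}\nu\}=r/(2^{q-1}R^{q-2})$. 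It then remains to solve $a_{k+1}\le a_k-\overline r\,a_k^{1+\delta}$ with $\delta=(q-2)/2$: since $(a_k)$ is nonincreasing and $t\mapsto t^{-\delta}$ is convex, one obtains $a_{k+1}^{-\delta}-a_k^{-\delta}\ge\delta\overline r$, and telescoping gives $a_k\le a_0\big(1+\delta\overline r\,a_0^{\delta}k\big)^{-1/\delta}$; taking square roots with $a_0=\|x_0-\bar z\|^2$ yields exactly the announced sublinear estimate.

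I expect the main obstacle to be the constant bookkeeping in part~ii), namely justifying the bound on $\|x_k-p_k\|$ used to pass from $\|x_k-p_k\|^2$ to $\|x_k-p_k\|^q$ so that $\overline r$ comes out precisely as in \eqref{def:R}; the remaining ingredients — the refinement of \eqref{eq:4} via uniform pseudo-monotonicity, the elementary convexity inequalities, and the classical lemma on sequences satisfying $a_{k+1}\le a_k-c\,a_k^{1+\delta}$ — are routine.
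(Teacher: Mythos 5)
Your proposal is correct and follows essentially the same route as the paper: the same strengthening of the one-step estimate via uniform pseudo-monotonicity (yielding exactly \eqref{eq:7}), the same conversion of $\|p_k-\bar z\|^q$ into a quadratic term through $R^{q-2}$ for $q\in[1,2]$, and the same $2^{q-1}$ convexity inequality leading to $a_{k+1}\le a_k-\overline{r}\,a_k^{q/2}$ for $q>2$, with the only cosmetic difference that you prove the resulting scalar recursion inline via convexity of $t\mapsto t^{-\delta}$ instead of citing \cite[Lemma 8]{NecCho:21}. The bookkeeping point you flag — needing $\|x_k-p_k\|\le R$ to pass from $\|x_k-p_k\|^2$ to $R^{2-q}\|x_k-p_k\|^q$ — is handled no more carefully in the paper's own proof, which invokes \eqref{def:R} for that very step, so it is a shared and easily repaired imprecision (enlarge $R$ to also dominate $\sup_k\|x_k-p_k\|$, which is finite by Fej\`er monotonicity) rather than a gap in your argument.
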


	\begin{proof}{Proof}
 From Theorem \ref{theo:1}, the sequence $(p_{k})_{k\in\mathbb{N}}$ generated by AFBF algorithm is convergent. Hence, for some $\bar{z} \in \text{zer}(A+B+C)$, we have  $R=\sup_{k\in \mathbb{N}}\|p_{k} - \bar{z}\|<+\infty$.
		Since $A+B+C$ is uniformly pseudo-monotone with 
  modulus $q$ and constant $\nu > 0$, 
        \vspace*{-0.1cm}
		\begin{equation*}
        (\forall k \in \mathbb{N})\quad 
			\langle u_k , p_{k} - \bar{z} \rangle \geq \dfrac{\nu}{2} \|p_{k} - \bar{z}\|^q.
        \vspace*{-0.1cm}
		\end{equation*}
		
		\noindent It follows from \eqref{eq:13} and \eqref{e:defuk} that
        \vspace*{-0.1cm}
		\begin{equation*}
			\|x_{k+1} - \bar{z}\|^2 \leq 	\|x_{k} - \bar{z}\|^2 
			- \|x_{k} - p_{k}\|^2  + \gamma_{k}^2\|Ap_{k} + Bp_{k}  - Ax_{k} - Bx_{k}\|^2 - \gamma_{k} \nu  \|p_{k} - \bar{z}\|^q.
        \vspace*{-0.1cm}
		\end{equation*}
		
		\noindent Since $\alpha_{k} \leq \alpha_{\max}$ and $\gamma_{\min} \leq \gamma_{k}$, we deduce from Lemma \ref{lem1} that
        \vspace*{-0.1cm}
		\begin{equation}
			\|x_{k+1} - \bar{z}\|^2  \leq 	\|x_{k} - \bar{z}\|^2 
			- (1-\alpha_{\max}) \|x_{k} - p_{k}\|^2  - \gamma_{\min} \nu  \|p_{k} - \bar{z}\|^q. \label{eq:7}
        \vspace*{-0.1cm}
		\end{equation}

\noindent i)  If $q\in[1,2]$, using the definition of $R$, we get
        \vspace{-0.5cm}
		\begin{align*}
			&(1-\alpha_{\max}) \|x_{k} - p_{k}\|^2  + \gamma_{\min} \nu  \|p_{k} - \bar{z}\|^q \\
            &{\overset{\eqref{def:R}}{\geq}} (1-\alpha_{\max}) \|x_{k} - p_{k}\|^2  + \gamma_{\min} \nu R^{q-2}  \|p_{k} - \bar{z}\|^2 \\
            &\geq \min \{ 1-\alpha_{\max}, \gamma_{\min} \nu R^{q-2} \}  \left(\|x_{k} - p_{k}\|^2  + \|p_{k} - \bar{z}\|^2 \right) {\overset{\eqref{def:R}}{\geq}} \frac{r}{2} \|x_{k} - \bar{z}\|^2.
		\end{align*}

        \vspace{-0.5cm}
		\noindent Combining the two last inequalities we obtain
        
		\begin{equation*}
			\|x_{k+1} - \bar{z}\|^2  \leq 	\left(1
			- \frac{r}{2} \right) \|x_{k} - \bar{z}\|^2.
		\end{equation*}
		
		\noindent 
  Therefore, unrolling the above inequality allows us to prove the first statement.
  
\noindent ii) If $q>2$, it follows from \eqref{def:R} that
    \vspace{-0.5cm}
	\begin{align*}
		&(1-\alpha_{\max}) \|x_{k} - p_{k}\|^2  + \gamma_{\min} \nu  \|p_{k} - \bar{z}\|^q \\
		&{\overset{\eqref{def:R}}{\geq}} \dfrac{ (1-\alpha_{\max}) }{R^{q-2}}   \|x_{k} - p_{k}\|^q  + \gamma_{\min} \nu  \|p_{k} - \bar{z}\|^q \\
		&\geq \min \left\{ \dfrac{ (1-\alpha_{\max}) }{R^{q-2}}, \gamma_{\min} \nu \right\}  \left(\|x_{k} - p_{k}\|^q  + \|p_{k} - \bar{z}\|^q \right) \geq \overline{r} \|x_{k} - \bar{z}\|^q.
	\end{align*}

    \vspace{-0.5cm}
	\noindent where in the last inequality we used the fact that $\|a + b \|^q \leq 2^{q-1} \|a\|^q + 
 2^{q-1} \|b\|^q$ for $q \geq 1$. Therefore, using \eqref{eq:7}, we obtain
		$\|x_{k+1} - \bar{z}\|^2  \leq 	\|x_{k} - \bar{z}\|^2 
		- \overline{r} \|x_{k} - \bar{z}\|^q$. 
 Multiplying the inequality above by $\overline{r}^{\frac{2}{q-2}}$, we obtain
	\begin{equation*}
		\overline{r}^{\frac{2}{q-2}}\|x_{k+1} - \bar{z}\|^2  \leq 	\overline{r}^{\frac{2}{q-2}}\|x_{k} - \bar{z}\|^2 
		- \left(\overline{r}^{\frac{2}{q-2}} \|x_{k} - \bar{z}\|^2\right)^{\frac{q}{2}}.
	\end{equation*}

\noindent Applying \cite[Lemma 8]{NecCho:21} (see Appendix) for $\zeta = \frac{q-2}{2}>0$,  we get
\begin{equation*}
    \|x_{k} - \bar{z}\| \leq \dfrac{ \|x_{0} - \bar{z}\| }{\left( \frac{q-2}{2} \overline{r} \|x_{0} - \bar{z}\|^{q-2}k+1 \right)^{\frac{1}{q-2}}}. 
\end{equation*}
This proves the second statement of the theorem. \Halmos
\end{proof}

\begin{remark}
In Theorem \ref{theo3} , we can replace the assumption of uniform pseudo-monotonicity with the following one: there exists $\nu>0$ and $q\geq 1$, such that, for every $w\in \mathbb{H}$, $\hat{w} \in (A+B+C)w$, and $\bar{z} \in \operatorname{zer}(A+B+C)$, the following inequality holds:
\begin{equation}
    \label{eq:38}
    \langle \hat{w}, w - \bar{z} \rangle \geq \nu \|w-\bar{z}\|^{q}. 
\end{equation}
Proceeding similarly to the proof of Theorem  \ref{theo3}, linear and sublinear rates can be derived under this condition. Condition \eqref{eq:38}, with $q=2$, covers, e.g., minimization problems with strongly star-convex  differentiable objective function or strongly quasi-convex   objective functions  \cite{HinSidSoh:20}. 
\end{remark}

\subsection{Second adaptive choice for the stepsize}

 \noindent   In this section, we present another  possible \textit{adaptive} choice for the stepsize when the operator $A$ satisfies Assumption \ref{ass1}.\ref{ass1vi} with $\mu \in ]0,2[$. Let $\epsilon \in ]0,1[$ be the desired accuracy  for solving problem \eqref{prob}, i.e.,  to obtain $u$ in the range of $A + B + C$ such that $\|u\|\leq \epsilon$. The procedure is described below.

\medskip

\begin{center}
		\noindent\fbox{%
			\parbox{12.7cm}{%
    \small
				\textbf{Stepsize Choice 2}:\\
                1. Choose $\epsilon \in ]0,1[$, $0<\alpha_{\min}\leq \alpha_{\max} <1$, and $\sigma>0$. \\
                2. For $k \geq 0$ do:
                \begin{itemize}
				\item [(a)] Choose $\alpha_{k}\in [\alpha_{\min},\alpha_{\max}]$ and compute $d(x_{k}) =  \zeta \|Ax_k + Bx_k\| + \tau$.
                \item [(b)]\label{s:s2s2}   Compute $\bar{\gamma}_{k}^{(1)}>0$ as the solution to the equation 
                \begin{align}
                    \label{gammabar1}
                    L_{B}^2 \gamma^2  + b(x_{k}) d(x_{k})^{\theta-2}\gamma^{\theta} + c(x_{k}) d(x_{k})^{\beta-2}\gamma^\beta  + 2^{2-\mu}a(x_{k}) \gamma^{\mu} \epsilon^{\mu-2} = \dfrac{\alpha_{k}}{2}
                \end{align}
                \item [(c)]\label{s:s2s3}   Compute $\bar{\gamma}_{k}^{(2)}>0$ as the solution to the equation 
                \begin{align}
                    L_{B}^2 d(x_{k})^{2-\mu} \gamma^{2} & +   b(x_{k}) d(x_{k})^{\theta-\mu} \gamma^{\theta}  + c(x_{k}) d(x_{k})^{\beta-\mu} \gamma^{\beta}  + a(x_{k})\gamma^{\mu}  = \dfrac{\epsilon^{2-\mu}}{2^{3-\mu}}\alpha_{k} 
                    \label{gammabar2}
                \end{align}
                \item [(d)] Update 
                \begin{equation}
                \bar\gamma_{k} = \min \left\{\bar{\gamma}_{k}^{(1)},\bar{\gamma}_{k}^{(2)}\right\}  \label{eq:stepsize2}
                \end{equation}
                 \item [(e)] Choose $\gamma_{k}$ such that
                \begin{equation} \label{stp2:gamma}
              \gamma_{k} \in   \begin{cases}
                 [\sigma,\bar{\gamma}_{k}] &  \text{if} \; \sigma \leq \bar{\gamma}_{k}  \\
                \bar{\gamma}_{k}  & \text{otherwise.} 
                \end{cases}
                \end{equation}
   \end{itemize}
			  }}
      \vspace*{-0.3cm} 
	\end{center}

\medskip

\noindent Note that $\gamma$ is well-defined in Steps 2.(b) and 2.(c) of 
this second procedure for the choice of the stepsize, i.e., there exist unique $\bar{\gamma}_{k}^{(1)},\bar{\gamma}_{k}^{(2)}$ satisfying \eqref{gammabar1} and \eqref{gammabar2}, respectively. Indeed, consider the functions
\vspace{-0.5cm}
\begin{align*}
    h(\gamma) &= \gamma^2L_{B}^2   + b(x_{k}) d(x_{k})^{\theta-2}\gamma^{\theta} + c(x_{k}) d(x_{k})^{\beta-2}\gamma^\beta  + 2^{2-\mu}a(x_{k}) \gamma^{\mu} \epsilon^{\mu-2} - \dfrac{\alpha_{k}}{2} \\
    r(\gamma) &= L_{B}^2 d(x_{k})^{2-\mu} \gamma^{2} +   b(x_{k}) d(x_{k})^{\theta-\mu} \gamma^{\theta}  + c(x_{k}) d(x_{k})^{\beta-\mu} \gamma^{\beta} + a(x_{k})\gamma^{\mu} - \dfrac{\epsilon^{2-\mu}}{2^{3-\mu}}\alpha_{k}, 
\end{align*}

\vspace{-0.5cm}
\noindent and variables $w_{k} = \dfrac{\sqrt{\alpha_{k}}}{L_{B}}$ 
and $\bar w_{k} = \dfrac{\sqrt{\alpha_{k}}}{L_{B} d(x_{k})^{\frac{2-\mu}{2}}}$. 
Note that $h(0) < 0$ and $h(w_{k})  \geq \alpha_{k}/2 > 0$. Since $h$ is continuous on $[0,w_{k}]$ there exist $\bar{\gamma}_{k}^{(1)} \in ]0,w_{k}[$ such that $h(\bar{\gamma}_{k}^{(1)}) = 0$. Moreover,  since $h'(\gamma)\geq 2\gamma L_{B}^{2} > 0 $ for every $\gamma \in ]0,+\infty[$, then $h$ is strictly increasing in $(0,+\infty)$. Hence, there exists exactly one $\bar{\gamma}_{k}^{(1)} > 0$ such that $h(\bar{\gamma}_{k}^{(1)} ) = 0$. Using the same arguments, we can conclude that $r$ is strictly increasing on $]0,+\infty[$ and there exists only one $\bar{\gamma}_{k}^{(2)} \in ]0,\bar w_{k}[$ such that  $r(\bar{\gamma}_{k}^{(2)})=0$. Since both functions $h$ and $r$ are strictly increasing in $(0,+\infty)$, $h(0) < 0$ and $r(0) < 0$, $\gamma_{k}$ defined in 
\eqref{stp2:gamma} satisfies the following two inequalities: 
\begin{equation}
h(\gamma_k) \le 0 \quad \text{and} \quad r(\gamma_k) \le 0. \label{eq:23}
\end{equation}

\noindent Note that 
\begin{equation}
\label{eq:upperBound}
\bar{\gamma}_{k}^{(1)} \leq \eta \,\ \text{and} \,\    \bar{\gamma}_{k}^{(2)} \leq \bar\eta:= \left( \dfrac{\epsilon^{2-\mu} \alpha_{\max}}{2^{3-\mu} L_{B}^2 \tau^{2-\mu}}\right)^{\frac{1}{2}},
\end{equation}
with $\eta$ defined in \eqref{eq:19}, where in the second inequality we used the fact that $d(x_{k})\geq \tau$. The theorem below provides a  bound on the number of iterations required, for a given $\epsilon > 0$,  to generate $\|u_{k}\| \leq  \epsilon$, with $u_k$ defined in \eqref{eq:15}.

\begin{theorem}\label{theo:convratemule2}
		Let $\epsilon \in ]0,1[$. Suppose that Assumption \ref{ass1} holds with $\mu \in ]0,2[$. Let 
  $(x_{k})_{k\in\mathbb{N}}$ and $(p_{k})_{k\in\mathbb{N}}$ be the sequences generated by AFBF algorithm  with stepsizes $(\gamma_{k})_{k\in\mathbb{N}}$  given by \eqref{stp2:gamma}. Then, for  $u_{k} = \gamma_{k}^{-1} (x_{k} - p_{k}) + Ap_{k} + Bp_{k}  - Ax_{k} - Bx_{k}  \in Ap_{k} + B p_{k} + Cp_{k}$ and $\gamma_{\min}(\epsilon) = \mathcal{O}(\epsilon^{(2-\mu)/\mu})$, performing 
        \begin{equation*}
            K \geq  \dfrac{1}{\epsilon^2} \left( \dfrac{(1+\sqrt{\alpha_{\max}})^2}{\gamma_{\min}^2(\epsilon) (1-\alpha_{\max})} \right) \|x_{0} - \bar{z}\|^2
        \end{equation*}
        iterations ensures that there exists $k \in \{0,\cdots,K-1\}$ such that 
       $\|u_{k}\| \leq  \epsilon $.
\end{theorem}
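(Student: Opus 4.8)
The plan is to establish, at each iteration $k$, a dichotomy: either $\|u_k\|\le\epsilon$ (so we have produced the required index) or the Lipschitz-type inequality \eqref{eq:12} holds at step $k$. I would obtain this by splitting according to whether $\|x_k-p_k\|\ge\gamma_k\epsilon/2$ or $\|x_k-p_k\|<\gamma_k\epsilon/2$; this threshold is exactly the one for which the algebra matches the coefficients hard-coded into \eqref{gammabar1} and \eqref{gammabar2}. In the first case, I would start from the bound on $\gamma_k^2\|Ax_k+Bx_k-Ap_k-Bp_k\|^2$ obtained exactly as in \eqref{eq:18} (valid for any $\mu$), control the $\theta$- and $\beta$-power terms by $b(x_k)d(x_k)^{\theta-2}\gamma_k^{\theta}$ and $c(x_k)d(x_k)^{\beta-2}\gamma_k^{\beta}$ via $\|x_k-p_k\|\le\gamma_kd(x_k)$ (a consequence of \eqref{ass:resC}, cf.\ \eqref{eq:21}) together with $\theta,\beta\ge2$, and control the $\mu$-power term via the lower bound $\|x_k-p_k\|\ge\gamma_k\epsilon/2$, which yields $\|x_k-p_k\|^{\mu-2}\le2^{2-\mu}(\gamma_k\epsilon)^{\mu-2}$ and thus turns $a(x_k)$ into the coefficient $2^{2-\mu}a(x_k)\gamma_k^{\mu}\epsilon^{\mu-2}$ of \eqref{gammabar1}. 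Since $h(\gamma_k)\le0$ by \eqref{eq:23}, summing these pieces gives \eqref{eq:12}.

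In the second case, $\|x_k-p_k\|<\gamma_k\epsilon/2$, I would bound $\|u_k\|$ directly. Writing $u_k=\gamma_k^{-1}(x_k-p_k)+(Ap_k+Bp_k-Ax_k-Bx_k)$, the first term is $<\epsilon/2$; for the operator-difference term I would split each power $\|x_k-p_k\|^{s}$, $s\in\{2,\theta,\beta\}$, as $\|x_k-p_k\|^{s-\mu}\|x_k-p_k\|^{\mu}$, bound the first factor by $\gamma_kd(x_k)$ and the remaining $\mu$-th powers by $\gamma_k\epsilon/2$, and then invoke Assumption~\ref{ass1}.\ref{ass1vi} together with $r(\gamma_k)\le0$ from \eqref{eq:23}. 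Because the four $\gamma$-monomials on the left of \eqref{gammabar2} sum to $\epsilon^{2-\mu}\alpha_k/2^{3-\mu}$, this gives $\|Ap_k+Bp_k-Ax_k-Bx_k\|^2\le\alpha_k\epsilon^2/4<\epsilon^2/4$, hence $\|u_k\|<\epsilon$. This completes the dichotomy.

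For the main statement I would argue by contradiction: suppose $\|u_k\|>\epsilon$ for all $k\in\{0,\dots,K-1\}$. By the dichotomy, \eqref{eq:12} holds at every such $k$, so repeating the computation in the proof of part i) of Theorem~\ref{theo:1} — using pseudo-monotonicity of $A+B+C$ to obtain $\langle u_k,p_k-\bar z\rangle\ge0$ for a fixed $\bar z\in\operatorname{zer}(A+B+C)$ — yields $\|x_{k+1}-\bar z\|^2\le\|x_k-\bar z\|^2-(1-\alpha_{\max})\|x_k-p_k\|^2$, so that $x_0,\dots,x_K$ stay in a ball around $\bar z$. By continuity of $A$, $B$, $a$, $b$, $c$ (hence of $d$), the coefficients $a(x_k)$, $b(x_k)d(x_k)^{\theta-2}$, $c(x_k)d(x_k)^{\beta-2}$ and their analogues in \eqref{gammabar2} are bounded along the sequence; substituting these bounds into \eqref{gammabar1}--\eqref{gammabar2} and noting that, as $\epsilon\to0$, the exponent-$\mu$ terms (which carry $\epsilon^{\mu-2}$, resp.\ $\epsilon^{2-\mu}$) dominate, I would get a uniform lower bound $\gamma_k\ge\gamma_{\min}(\epsilon)$ with $\gamma_{\min}(\epsilon)=\mathcal{O}(\epsilon^{(2-\mu)/\mu})$. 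Then, exactly as in \eqref{eq:9bis}, $\|x_k-p_k\|^2\ge\gamma_k^2\|u_k\|^2/(1+\sqrt{\alpha_{\max}})^2>\gamma_{\min}^2(\epsilon)\epsilon^2/(1+\sqrt{\alpha_{\max}})^2$, and summing the above Fej\'er estimate over $k=0,\dots,K-1$ gives $(1-\alpha_{\max})K\,\gamma_{\min}^2(\epsilon)\epsilon^2/(1+\sqrt{\alpha_{\max}})^2<\|x_0-\bar z\|^2$, which contradicts the assumed lower bound on $K$. Hence some $k\in\{0,\dots,K-1\}$ satisfies $\|u_k\|\le\epsilon$.

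I expect the delicate part to be the bookkeeping in the two case analyses — getting the powers of $\gamma_k$, $d(x_k)$, $\epsilon$ and $2$ to line up exactly with the coefficients built into \eqref{gammabar1}--\eqref{gammabar2}, and tracking correctly where $\epsilon^{\mu-2}$ versus $\epsilon^{2-\mu}$ must appear. The stepsize lower bound $\gamma_{\min}(\epsilon)=\mathcal{O}(\epsilon^{(2-\mu)/\mu})$ and the final counting step are then routine adaptations of the proofs of Theorems~\ref{theo:1} and~\ref{theo:2}.
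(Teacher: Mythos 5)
Your proposal is correct and follows essentially the same route as the paper: the same per-iteration dichotomy on whether $\gamma_k^{-1}\|x_k-p_k\|$ exceeds $\epsilon/2$, with the first branch recovering \eqref{eq:12} via \eqref{gammabar1} and $h(\gamma_k)\le 0$, the second branch bounding $\|u_k\|\le\epsilon$ directly via \eqref{gammabar2} and $r(\gamma_k)\le 0$, and the same Fej\'er summation with the lower bound $\gamma_k\ge\gamma_{\min}(\epsilon)$ to count iterations. The only difference is cosmetic: you phrase the final step as a contradiction, whereas the paper splits the whole trajectory into the two corresponding cases.
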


\begin{proof}{Proof}

\begin{enumerate}[i)]
\item First, consider the case when, for every $k\in \{0,\ldots,K-1\}$,
$\gamma_{k}^{-1}\|x_{k} - p_{k}\|>\epsilon/2$. 
We deduce from  \eqref{eq:18}, \eqref{eq:21} and \eqref{eq:23}  that
\vspace{-0.5cm}
\begin{align}
&\gamma_{k}^2 \|Ax_{k} + Bx_{k}- Ap_{k}  - Bp_{k} \|^2 \nonumber\\
 &{\overset{\eqref{eq:18},\eqref{eq:21}}{\leq}} 2\left(\gamma_{k}^2L_{B}^2   + b(x_{k}) d(x_{k})^{\theta-2}\gamma_{k}^{\theta} + c(x_{k}) d(x_{k})^{\beta-2}\gamma_{k}^\beta  + 2^{2-\mu}a(x_{k}) \gamma_{k}^{\mu} \epsilon^{\mu-2} \right) \|x_{k} - p_{k}\|^2 \nonumber \\
 &{\overset{\eqref{eq:23}}{\leq}} \alpha_{k} \|x_{k} - p_{k}\|^2.  \nonumber
\end{align}

\vspace{-0.5cm}
\noindent Let $\bar{z} \in \text{zer}(A+B+C)$.  Since $\alpha_{k} \leq \alpha_{\max}$, using a similar reasoning as in  \eqref{eq:13}, the inequality \eqref{e:17} also holds when $k\in \{0,\ldots,K-1\}$, for the Stepsize Choice 2.
This implies that 
\vspace{-0.5cm}
\begin{align}
    (1-\alpha_{\max}) \sum_{k=0}^{K-1} \|x_{k} - p_{k}\|^2 \leq \|x_{0} - \bar{z}\|^2,\label{e:sumxpK}\\(\forall k \in \{0,\ldots,K\})\quad 
    \|x_{k}-\bar{z}\| \leq \|x_0 - \bar{z}\|. \nonumber 
\end{align}

\vspace{-0.5cm}
\noindent Let $D$ be the closed ball of center 
$\bar{z}$ and radius  $\|x_0 - \bar{z}\|$.
Since $A$, $B$, $a$, $b$, and $c$ are continuous on $\operatorname{dom} C$, the quantities define below take  finite values:
\vspace{-0.5cm}
\begin{align*}
R_1 = \sup_{x\in D} a(x),
\quad R_2 = \sup_{x\in D} b(x)d(x)^{\theta-2},
\quad R_3 = \sup_{x\in D} c(x)d(x)^{\beta-2}
\\
R_4 = \sup_{x\in D}  d(x)^{2-\mu},\quad
R_5 = \sup_{x\in D} b(x)d(x)^{\theta-\mu},\quad
R_6 = \sup_{x\in D} c(x)d(x)^{\beta-\mu}. 
    \end{align*}

\vspace{-0.5cm}
\noindent From \eqref{eq:upperBound}, \eqref{gammabar1} and  \eqref{gammabar2}, one can lower-bound the stepsize as
    \begin{equation*}
    (\forall k\in \{0,\ldots,K-1\})\quad 
        \gamma_{k} \geq  \gamma_{\min}(\epsilon):=\min\{\gamma_{\min}^{(1)}(\epsilon), \gamma_{\min}^{(2)}(\epsilon),\sigma\}, 
    \end{equation*}
    with
    \begin{equation}
	 \gamma_{\min}^{(1)}(\epsilon) :=   \left(\dfrac{\alpha_{\min}}{2\left( L_{B}^2 \eta^{2- \mu} + 2^{2-\mu}R_1 \epsilon^{\mu-2}  + R_2 \eta^{\theta- \mu}   + R_{3} \eta^{\beta- \mu}\right)} \right)^{\frac{1}{\mu}}
    \end{equation}
    \noindent 
    and
    \begin{equation}
	 \gamma_{\min}^{(2)}(\epsilon)  :=   \left(\dfrac{\epsilon^{2-\mu}\alpha_{\min}}{2^{3-\mu}\left(  R_1   + L_{B}^2 \bar\eta^{2-\mu} R_{4} + R_5 \bar\eta^{\theta-\mu}   + R_{6} \bar\eta^{\beta-\mu}\right)} \right)^{\frac{1}{\mu}}.
    \end{equation}

Note that, if $\epsilon$ is sufficiently small, then  $\gamma_{\min}(\epsilon) = \mathcal{O}(\epsilon^{(2-\mu)/\mu})$.  Using \eqref{eq:14}, we finally obtain
\begin{equation*}
(\forall k \in \{0,\ldots,K-1\})\quad
\|u_k\| \le (\gamma_{\min}(\epsilon))^{-1} (1+\sqrt{\alpha}_{\max}) \|x_k-p_k\|,
\end{equation*}
which, by virtue of \eqref{e:sumxpK}, yields
\begin{equation*}
			\min_{0\leq k \leq K-1} \|u_{k}\|^2 \leq \dfrac{1}{K} \left( \dfrac{(1+\sqrt{\alpha_{\max}})^2}{\gamma_{\min}^2(\epsilon)(1-\alpha_{\max})} \right) \|x_{0} - \bar{z}\|^2.
\end{equation*}

\item Second, consider the case when, there exists 
$k \in \{0, \ldots,K-1\}$ such that
$\gamma_{k}^{-1} \|x_{k} - p_{k}\| \leq \epsilon/2$. Let us prove that  $\|Ap_{k} + Bp_{k}  - Ax_{k} - Bx_{k}\| \leq \epsilon/2$. 
Indeed, we deduce from \eqref{eq:18}  that
\vspace{-0.5cm}
\begin{align*}
&\|Ap_{k} + Bp_{k}  - Ax_{k} - Bx_{k}\|^2 \\
 &\leq 2 L_{B}^2\|x_{k} - p_{k}\|^2 + 2 a(x_{k}) \|x_{k} - p_{k}\|^{\mu}+ 2 b(x_{k}) \|x_{k} - p_{k}\|^{\theta} + 2 c(x_{k}) \|x_{k} - p_{k}\|^{\beta} \\
 &{\overset{\eqref{eq:21}}{\leq}} 2( L_{B}^2 d(x_{k})^{2-\mu} \gamma_{k}^{2-\mu} +   b(x_{k}) d(x_{k})^{\theta-\mu} \gamma_{k}^{\theta-\mu}+ c(x_{k}) d(x_{k})^{\beta-\mu} \gamma_{k}^{\beta-\mu}  + a(x_{k}))\|x_{k} - p_{k}\|^{\mu}\\
&\leq 2( L_{B}^2 d(x_{k})^{2-\mu} \gamma_{k}^{2} +   b(x_{k}) d(x_{k})^{\theta-\mu} \gamma_{k}^{\theta}  + c(x_{k}) d(x_{k})^{\beta-\mu} \gamma_{k}^{\beta} + a(x_{k})\gamma_{k}^{\mu}) \dfrac{\epsilon^{\mu}}{2^{\mu}}  {\overset{\eqref{eq:23}}{\leq}} \; \dfrac{\epsilon^2\alpha_{k}}{4} \leq \dfrac{\epsilon^2}{4}. 
\end{align*}

\vspace{-0.5cm}
\noindent  Hence, from the definition of $u_{k}$, applying the triangle inequality leads to $\|u_{k}\| \leq \epsilon$.  Hence, the statement of the theorem  is proved.   \Halmos 
\end{enumerate}

\end{proof}

\noindent It can be noticed that the literature on convergence rates for the general inclusion problem addressed in this section is scarce. Existing results predominantly focus on the composite problem outlined in Example \ref{ex:3terms}, particularly when $g=0$ and $L=I_n$, spanning both the convex case \cite{Nes:15} and the nonconvex one
\cite{Yas:16}.


\section{Simulations}
In this section, we evaluate the performance of our algorithm on convex quadratically constrained quadratic programs (QCQPs), see \eqref{eq:qcqp},  using synthetic and real data. {Then, we also test our algorithm on a pseudo-convex problem  using synthetic data}. We compare our Adaptive Forward-Backward-Forward  (AFBF) algorithm to Tseng's algorithm \cite{Tse:00}, and one dedicated commercial optimization software package, Gurobi \cite{Gurobi} (which has a specialized solver for QCQPs).  We implemented the algorithm AFBF as follows:  at each iteration $k\in \mathbb{N}$, the stepsize $\gamma_{k} = \bar\gamma_{k}$, where $\bar\gamma_{k}$ is computed as in \eqref{eqt:stepsize}, $b(x_{k})$ and $c(x_{k})$ are computed as in \eqref{eq:34}, and $\alpha_{k} = 0.99$. The code was implemented using MATLAB R2020a on a computer equipped with an AMD Ryzen CPU operating at 3.4 GHz and 64 GB of RAM.

\subsection{Solving convex QCQPs}
We consider the following convex QCQP 

\vspace{-0.5cm}

\begin{align}
            \min_{x\in \mathbb{R}^n} \,\ & f(x) = \frac{1}{2}x^\top Q_{0}x + b^\top x \nonumber \\
            \text{s.t.} \quad & g_{i}(x) = \frac{1}{2} x^\top Q_{i}x + l^\top_{i}x - r_{i}  \leq 0, \quad \forall i \in \{1,\ldots,m\}, \label{prob:sim}
\end{align}

\vspace{-0.8cm}

\noindent where $(Q_{i})_{0\le i \le m}$ are symmetric 
positive semidefinite  matrices in
$\mathbb{R}^{n \times n}$,  $(l_{i})_{1\le i \le m}$ and $b$ are vectors in $\mathbb{R}^{n}$, and $(r_i)_{1\le i \le m}$ are nonnegative reals.  Note that the operator $A$ defined in \eqref{opA:QCQP} for  QCQPs fits \eqref{eq:34}.  For every $i\in \{0,\ldots,m\}$,
 $Q_i$ was generated as $Q_i = R_i^\top R_i$, where $R_i \in \mathbb{R}^{p\times n}$ is a sparse random matrix whose element are drawn independently from a uniform distribution over $[0,1]$. Moreover, the components of vectors $b$ and $(l_{i})_{1\le i \le m}$
were  generated from a standard normal distribution $\mathcal{N}(0,1)$. Constants $(r_{i})_{1\le i \le m}$ and the components of the algorithm starting point were generated from a uniform distribution over $[0,1]$. For the algorithm  in \cite{Tse:00}, named \texttt{Tseng}, the line-search is computed as in \cite[equation (2.4)]{Tse:00}, with $\theta = 0.995$, $\sigma=1$, and $\beta = 0.5$. We consider the following stopping criteria for AFBF  and Tseng's algorithms: 

\vspace{-0.5cm}

\begin{equation*}
    \|u_{k}\| \leq 10^{-2},  \; \text{with}\;  u_{k}  \; \text{defined in \eqref{eq:15}}. 
\end{equation*}

\vspace{-0.6cm}

\begin{table}[h!]
	\scriptsize
 \centering 
	\begin{tabular}{| c| c| c| c | c | c | c |  c |}
		\hline
		\multirow{2}{*}{\textbf{n}} & \multirow{2}{*}{\textbf{p}}	& \multirow{2}{*}{\textbf{m}} & \multicolumn{2}{|c|}{\textbf{AFBF}}    & \multicolumn{3}{|c|}{\textbf{Tseng} \cite{Tse:00}} \\ \cline{4-8}  
        & & & ITER & CPU & ITER & LSE & CPU  \\
        \hline
        $10^3$ & $10^3$ & 250 & 3914 & \textbf{36.09}  & 15298 & 91513 & 387.4
        \\
        $10^3$ & $10^3$ & 500 & 7563 & \textbf{131.8} & 23400 & 140070  & 1179.3 \\
        $10^3$ & $10^3$ & $10^3$ & 19044 & \textbf{597.6} & 37932 & 227029 & 3570.4\\
        $10^3$ & $10^3$ & $2\cdot10^3$ & 44039 & \textbf{2900.1} & 63143 & 377963 & 12990 \\ 
        $10^4$ & $10^4$ & 125 & 4705 & \textbf{195.5} & 3351 &  19963 & 418.6 \\
        $10^4$ & $10^4$ & 250 & 6131 & \textbf{475.2} & 4888 & 29209 & 1178 \\
        $10^4$ & $10^4$ & 500 & 8862 & \textbf{1329} &  7240 & 43319 & 3398 \\
        $10^4$ & $10^4$ & 750 & 11380 & \textbf{1821} & 8670 & 51893 & 4251
        \\ 
        \hhline{|=|=|=|=|=|=|=|=|}
        $10^3$ & 500 & 250 & 4992 & \textbf{66.9} & 14750 & 88223 & 590.9  \\
        $10^3$ & 500 & 500 & 11069 & \textbf{288.7} & 25741 & 154114 & 2068.7 \\
        $10^3$ & 500 & $10^3$ & 24460 & \textbf{1192.7} & 45654 & 273360 & 7010.4 \\
        $10^3$ & 500 & $2\cdot10^3$ & 59762 & \textbf{5939}  &  * & * & * \\
        $10^4$ & $5 \cdot 10^3$ & 125 & 5318 & \textbf{336}  & 3428 & 20412 & 689.8 \\
        $10^4$ & $5 \cdot 10^3$ & 250 & 7445 & \textbf{895.3}  & 4762 & 28452 & 1864\\ 
        $10^4$ & $5 \cdot 10^3$ &  500 & 11515 & \textbf{2711}  & 11271 & 67514 & 8647  \\
        $10^4$ & $5 \cdot 10^3$ & 750 & 15719 & \textbf{3655.4}  & 14073 & 84324 & 10462 \\
        \hline
	\end{tabular}
	\begin{center}
		\caption{CPU time (sec) and number of iterations (ITER) for solving synthetic QCQPs of the form \eqref{prob:sim} with  AFBF and Tseng's \cite{Tse:00} algorithms: strongly convex case (top) and convex case (bottom).}
	\end{center}
 \vspace{-0.8cm}
 \label{table:1}
\end{table}

\noindent\noindent The CPU time (in seconds) and the number of iterations (ITER) required by each algorithm for solving problem \eqref{prob:sim} are given in Table \ref{table:1}, where ``*" means that the corresponding algorithm needs more than 5 hours to solve the problem.   Moreover, for Tseng's algorithm, we also report the number of line-search evaluations (LSE). The first half of the table corresponds to strongly convex functions ($Q_i \succ 0$, for every $i\in \{0,\ldots,m\}$) and the other half is for convex functions ($Q_i \succeq 0$, for every $i\in \{0,\ldots,m\}$). As we can notice in  Table~\ref{table:1},  AFBF outperforms Tseng's algorithm (sometimes even $10\times$ faster). Comparisons with Gurobi software are not included in Table~\ref{table:1}, since we observed that its performance is quite poor on these large test cases.

\subsection{Solving multiple kernel learning in support vector machine}
\noindent In this section, we test AFBF  on Support Vector Machine (SVM) with multiple kernel learning using real data, which can also be formulated as a convex QCQP. Let us briefly describe the problem (our presentation follows \cite{CheLi:21}). Given a set of $n_\text{dat}$ data points $\mathcal{S} = \{(d_{j},l_{j})\}_{1\le j\le n_\text{dat}}$ where, for every $j\in \{1,\ldots,n_\text{dat}\}$ $d_j \in \mathbb{R}^{n_d}$ is the input vector and $l_j \in \{-1, 1\}$ is its class label, SVM searches for a hyperplane that can best separate the points from the two classes. When the data points cannot be separated in the original space $\mathbb{R}^{n_d}$, we can search in a feature space $\mathbb{R}^{n_f}$, by mapping the input data space $\mathbb{R}^{n_d}$ to the feature space through a function $\varphi: \mathbb{R}^{n_d} \to \mathbb{R}^{n_f}$. Using function $\varphi$, we can define a kernel function $\kappa: \mathbb{R}^{n_d} \times \mathbb{R}^{n_d}\to \mathbb{R}$ as $\kappa(d_j, d_{j'}):= \langle \varphi (d_j), \varphi(d_{j'}) \rangle$ for every $(d_j, d_{j'}) \in (\mathbb{R}^{n_d})^2$, where $\langle \cdot, \cdot \rangle$ denotes the inner product of $\mathbb{R}^{n_f}$. One popular choice of the  kernel function is the Gaussian kernel: 
\begin{equation*}
     \kappa_\text{GAU} (d_j, d_{j'}) = \exp\left(-\frac{\|d_j -  d_{j'}\|^2}{2\bar\sigma^2}\right), \quad \forall (j,j') \in \{1,\ldots,n_\text{dat}\}^2
\end{equation*}

\vspace{-0.2cm}

\noindent with $\bar\sigma > 0$.  We separate the given set $\mathcal{S}$ into a training set, $\mathcal{S}_\text{tr} = \{(d_{j},l_{j})\}_{1\le j\le n_\text{tr}}$ and a testing set,  $\mathcal{S}_\text{te} = \{(d_{j},l_{j})\}_{1\le j\le n_\text{te}}$, such that $n_\text{tr} + n_\text{te} = n_\text{dat}$. Choosing a set of kernel functions $(\kappa_i)_{1\le i\le m}$, the SVM classifier is learned by solving the following convex QCQP problem on the training set  $\mathcal{S}_\text{tr}$:

\vspace{-0.9cm}

\begin{align}
    \min_{x \in \mathbb{R}^{n_{t_r}}, x_{0} \in \mathbb{R}, x \geq 0} & \;\dfrac{1}{2}  x^\top Q_0   x - e^\top x + Rx_0 \nonumber \\
  \text{s.t.} \quad &
  \dfrac{1}{2} x^\top \left(\dfrac{1}{R_i} G_{i}(K_{i,tr})\right)x - x_0 \leq 0 \quad \forall i\in \{1,\ldots,m\}, \;\;  
  \sum_{j=1}^{n_{t_r}} l_{j}x_{j} =0,
  \label{MuliSVM} 
\end{align}

\vspace{-0.9cm}

\noindent where $Q_0= C^{-1} I_{n_{t_r}}$, $C$ being a parameter related the soft margin criteria, and the vector $e$ denotes a vector of all ones. In addition, for every $i\in \{1,\ldots,m\}$, $K_{i, \text{tr}} \in \mathbb{R}^{n_\text{tr} \times n_\text{tr}}$ is a symmetric positive semidefinite matrix, whose $(j,j')$ element is defined by the kernel function: $[K_{i,\text{tr}}]_{j,j'}:= \kappa_i(d_j, d_{j'}) $. 
The matrix $G_{i}(K_{i, \text{tr}}) \in \mathbb{R}^{n_\text{tr} \times n_\text{tr}}$ in the $i$-th quadratic constraint of \eqref{MuliSVM} is a symmetric positive semidefinite  matrix, its $(j,j')$ element being $[G_i(K_{i,\text{tr}})]_{j,j'} = l_j l_{j'} [K_{i,\text{tr}}]_{j,j'}$. Moreover, $R$ and $(R_i)_{1\le i \le m}$ are given positive constants.  Clearly, \eqref{MuliSVM} is an instance of problem  \eqref{eq:qcqp}. 
In our experiments,  we employed a predefined set of Gaussian kernel functions $(\kappa_i)_{1\le i\le m}$, with the corresponding $(\bar\sigma^2_i)_{1\le i \le m}$ values. Following the pre-processing strategy outlined in \cite{CheLi:21}, we normalized each matrix $K_{i, \text{tr}}$ such that $R_i= \text{trace} (K_{i,\text{tr}})$ was set to $1$, thus restricting $R = \sum_{i=1}^m R_i = m$. 
For each dataset, the $\bar\sigma^2_i$'s  were set to $m$ different grid points (generating $m$ kernels) within the interval $[10^{-1}, 10]$ for the first five datasets and $[10^{-2}, 10^2]$ for the last one, with two different values for the number of grid points, namely $m = 3$ and $m = 5$.  Additionally, we set $C = 1$. In order to give a better overview of the advantages offered by the multiple kernel SVM approach,  we also learn a  single Gaussian kernel SVM classifier with $\bar\sigma^2$  set a priori to $7$, by solving the following QP problem: 
\vspace{-0.3cm}
\begin{align}\label{s-SVM}
    \min_{ x \in [0,C]^{n_\text{tr}}} & \frac{1}{2} x^\top G(K_{\text{tr}}) x  - e^\top x, \quad \text{s.t.} \;\;\;  \sum_{j=1}^{n_\text{tr}} l_{j} x_{j} =0. 
\end{align}

\vspace{-0.9cm}

\noindent We consider the following stopping criterion for AFBF  and Tseng's algorithms: 

\vspace{-0.9cm}

\begin{align*}
  \max \left(   |f(x) - f^{*}|, \left|\sum_{j=1}^{n_\text{tr}} l_{j} x_{j}\right|, \max_{1\le i\le m} (0,g_{i}(x)) \right) \leq 10^{-4},
\end{align*}

\vspace{-0.9cm}

\noindent with $f^{*}$ computed by  Gurobi solver and the starting point chosen as the null vector. Moreover, for Tseng's algorithm the line-search was computed as in \cite[equation (2.4)]{Tse:00}, with $\theta = 0.99$, $\sigma=1$, and $\beta = 0.1$. Table \ref{table:2} presents a comparison between AFBF algorithm , Tseng's algorithm  \cite{Tse:00}, and Gurobi solver \cite{Gurobi}  in terms of CPU time for solving the QCQP of the form  \eqref{MuliSVM} using 6 real datasets \texttt{Ozone-level-8hr}, \texttt{mfeat-fourier}, \texttt{USPS}, \texttt{isolet}, \texttt{semeion}, and \texttt{Ovarian} from \url{https://www.openml.org}.  Each dataset was divided into a training set comprising $80\%$ of the data and a testing set of  the remaining $20\%$. For each dataset, we also provided the nonzero optimal dual multiplier  value $y^*$ corresponding to the unique active  quadratic inequality constraint and the corresponding value of $\bar\sigma^2$ corresponding to that active constraint. Finally, the table presents a comparison between the Testing Set Accuracies on the remaining testing datasets obtained by the multiple Gaussian kernel SVM classifier with $\bar\sigma^2$ derived from  \eqref{MuliSVM}, named TSA, and the single Gaussian kernel SVM classifier with $\bar\sigma^2=7$, named TSA0. 

\vspace{-0.2cm}

\begin{table}[h!]
\scriptsize
	\centering 
	\begin{tabular}{ | c | c| c |c | c | c| c | c| c| c| c|}
		\hline
        Dataset & \multirow{2}{*}{m}  & \multirow{2}{*}{TSA0} & \multirow{2}{*}{TSA} & \multirow{2}{*}{$\bar\sigma^2$} & \multicolumn{2}{|c|}{AFBF} & \multicolumn{2}{|c|}{TSENG} & \multicolumn{2}{|c|}{Gurobi} \\ \cline{6-11}
        ($n$, $n_{d}$) &  &  &  & & CPU &$y^{*}$ & CPU &$y^{*}$ & CPU &$y^{*}$\\  \hline
        Ozone-level-8hr & 3 & \multirow{2}{*}{52.7} & 91.7 & 5.05 & \textbf{31.18} & 3.1 & 58.09 & 2.99 & 95.61  & 3 \\  
        (2534, 72) & 5 &  & 91.7 & 2.575 & \textbf{49.9} & 5.04 & 61.38 & 5 & 339.88 & 5 \\ \hline
        mfeat-fourier & 3 &  \multirow{2}{*}{87.7} & 89 & 5.05 & \textbf{11.82} & 3.04 & 21.5 & 2.99 & 40.56 & 3 \\ 
        (2000, 76) & 5 &  & 89 & 2.575 & \textbf{20.54} & 5.02  & 35.06 & 4.99 & 170.06 & 5  \\ \hline
        USPS & 3  & \multirow{2}{*}{60.2}  & 91.5 & 10 & \textbf{4} & 3  & 5.23 & 3 & 232.98 & 3 \\
       (1424, 256) & 5 & & 92.2  & 10 & \textbf{3.95} & 5 & 8.33 & 5 & 1106.7 & 5  \\ \hline
        isolet & 3 & \multirow{2}{*}{57.5}  & 95 & 10 & \textbf{0.59} & 3 & 1.35 & 3 & 10.8 & 3 \\
        (600, 617) & 5 &  & 95.8 & 10 & \textbf{0.68} & 4.97 & 2.23 & 5 & 25.09 & 5 \\ \hline
        semeion & 3 & \multirow{2}{*}{47.6} & 77.8 & 10 & \textbf{0.75} & 2.98 & 1.43 & 2.97 & 1.37 & 3 \\
        (319, 256) & 5 &  & 84.1 & 10 & \textbf{0.89} & 5.02 & 3.19  & 4.99 & 4.12 & 5 \\ \hline
        Ovarian & 3 & \multirow{2}{*}{66}  & 78 & 100 & \textbf{0.38} & 3.04  & 1.72 & 2.99 & 0.82 & 3 \\ 
        (253, 15154) & 5 &  & 88 & 100 & \textbf{0.47} & 4.96 & 2.48 & 4.99 & 2.31 & 5 \\ \hline
	\end{tabular}
  \begin{center}
		\caption{ Comparison between  our algorithm AFBF, Tseng's algorithm \cite{Tse:00} and Gurobi  solver \cite{Gurobi} in terms of CPU time (in seconds) to solve QPQCs of the form \eqref{MuliSVM} for various real datasets and two different choices of $m = 3, 5$. Additionally, TSA's are provided for \eqref{MuliSVM} and \eqref{s-SVM}.}
	\end{center}
 \label{table:2}
\end{table}

\vspace{-0.8cm}


\subsection{Fractional programming}
In this final set of experiments, we consider the linear fractional program \eqref{LinFracPro}, where the objective function  is pseudo-convex.  We compare our algorithm  with \cite[Algorithm 1]{ThoVuo:19}  developed for solving non-Lipschitzian and pseudo-monotone variational inequalities. We implemented \cite[Algorithm 1]{ThoVuo:19}  with the parameters   $\mu = 0.995$, $\gamma=1$, and $l = 0.001$. Note that $\mu$ in Algorithm 1 in \cite{ThoVuo:19} has a different meaning from our $\mu$ in Assumption \ref{ass1}.\ref{ass1vi}. Actually, according to Example 3.4, we have in our case $\mu=2$ for this fractional programming problem. Hence,  we use the first choice of the stepsize. From Theorem 1 in \cite{MarCar:12}, 
when the vector $r=\eta d$ with $\eta\geq0$, the objective function $f$ in \eqref{LinFracPro} is pseudo-convex on $D$.

\vspace{-0.1cm}

\begin{figure}[ht!]
	\centering
	\includegraphics[width=6cm, height=5cm]{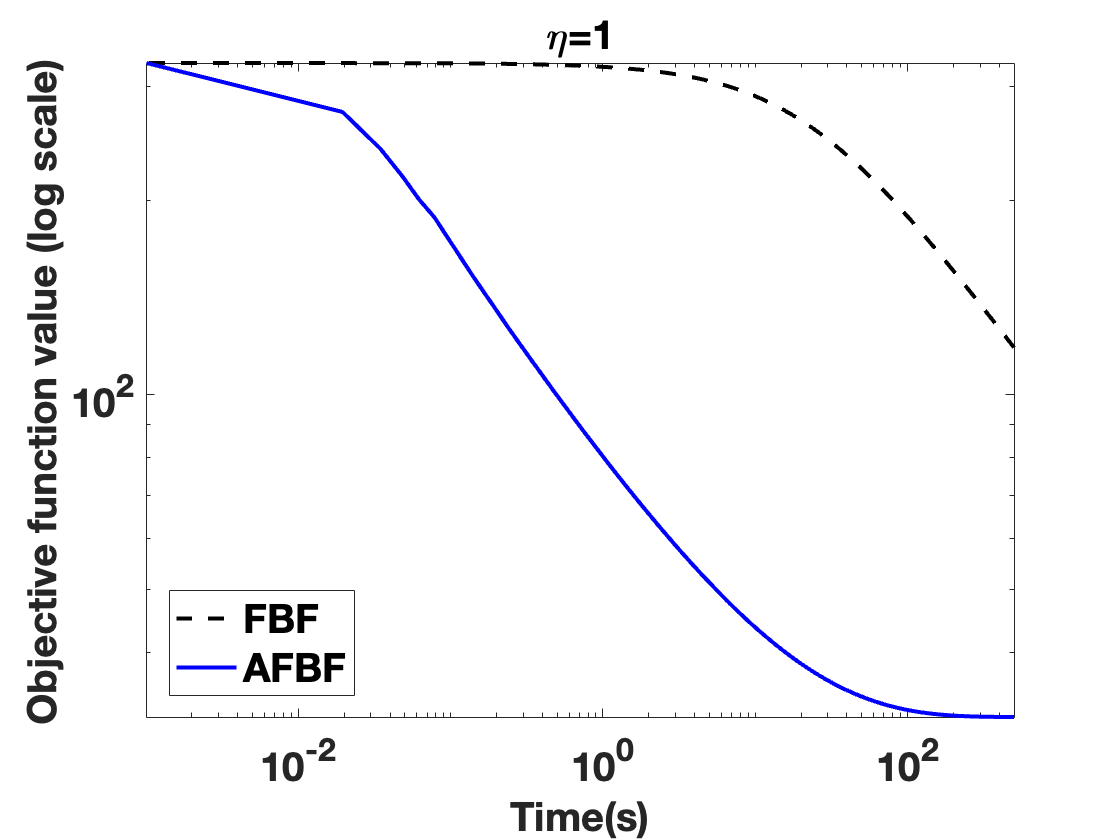} 
	\includegraphics[width=6cm,height=5cm]{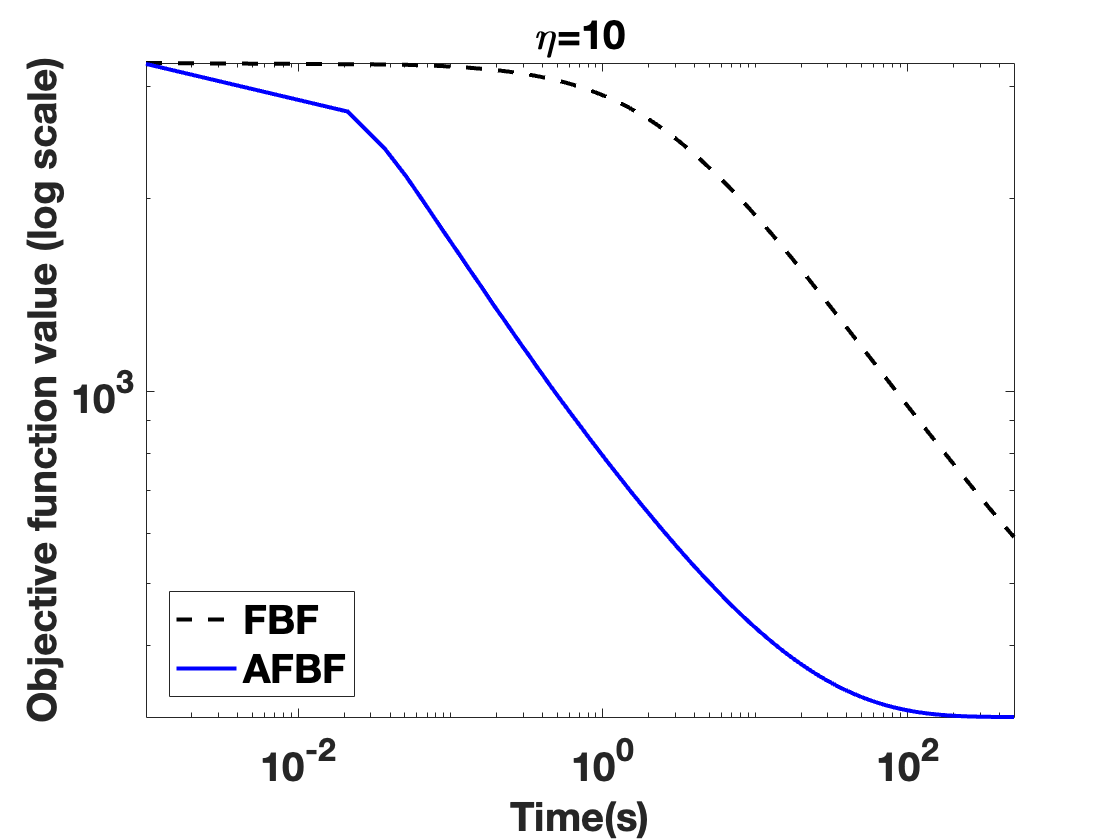} 
\vspace{-0.1cm}
	\caption{Evolution of Algorithm 1 in \cite{ThoVuo:19} (called here FBF) and our AFBF algorithm in function values along time for two linear fractional programs of the form \eqref{LinFracPro} with data generated randomly, $\eta=1$ and $\eta=10$, and  dimension  $n=10^6$.}
 \label{fig1}
\end{figure}

\noindent In our simulations, the components of the vector $d$ and  the constant $h_0$ were drawn independently from a standard normal distribution $\mathcal{N}(0,1)$, vector $r$ was chosen as $r=\eta d$, with $\eta>0$, vector $h$ was taken as a perturbation of vector $d$, i.e., $h = d + 0.01 \nu$. Vector $\nu$ and constant $d_{0}$ were generated from a uniform distribution. Moreover, we chose the starting point $x_{0}$ as $x_{0}=\operatorname{proj}_{D}(t)$, vector $t$ being generated from a standard normal distribution $\mathcal{N}(0,1)$. The results are displayed in Figure \ref{fig1}, where we plot the evolution of function values along time (in sec). Note that, AFBF is faster than Algorithm 1 from \cite{ThoVuo:19} (named here FBF) for chosen values of $\eta$.

\vspace{-0.2cm}



\section{Conclusion}
\label{sec:Conclusion}
In this paper, we have addressed the problem of finding a zero of 
a pseudo-monotone operator. We have made the assumption that this operator
can be split as a sum of three operators: the first continuous operator $A$ satisfies a generalized Lipschitz inequality, the second operator $B$ is Lipschitzian,  and the third one $C$ is maximally monotone. For solving this  challenging problem, our solution relied upon the forward-backward-forward algorithm, which requires however the use of an iteration-dependent stepsize. In this context, we designed two novel adaptize stepsize strategies. We also derived asymptotic sublinear convergence properties under the considered assumptions. Additionally, when $A+B+C$  satisfies a uniform pseudo-monotonicity condition, the convergence rate becomes even linear. Preliminary numerical results confirm the good performance of our algorithm. 

\medskip 

\noindent For future research, it would be intriguing to investigate the possibility of achieving more precise convergence rates. For instance, in Example \ref{ex:3terms}, when $g=0$ and $L=I_n$, \cite{Nes:15} introduces a universal gradient method with a convergence rate of order $O(\epsilon^{-2/(1+\nu)})$ for the convex (i.e., maximally monotone) case, where $\nu$ is the constant from Definition \ref{def:holder} (note that $\mu=2 \nu$ in this scenario). Conversely, in the nonconvex (i.e., nonmonotone) case under the same settings, \cite{Yas:16} examines a gradient-type method with an adaptive stepsize and achieves a convergence rate of order $O(\epsilon^{-\left(\frac{1 + \nu}{\nu}\right)})$ in the norm of the gradient. On the other hand, the convergence rate obtained in Theorem \ref{theo:convratemule2} within the general nonmonotone framework we considered is of order $O(\epsilon^{-2/\nu})$ in the norm of the gradient, which is not as favorable as the rate  in \cite{Yas:16}.




\begin{APPENDIX}{}
\noindent 		\cite[Lemma 8.(i)]{NecCho:21} Let $\zeta > 0$ and  $(\Delta_{k})_{k\geq0}$ be a  decreasing sequence of positive numbers satisfying the following recurrence:  
		\begin{equation*}
  (\forall k \geq 0)\quad 
			\Delta_{k} - \Delta_{k+1} \geq \Delta_{k}^{\zeta+1}.
		\end{equation*}
		\noindent Then, $\Delta_k \to 0$ with sublinear rate:
			\begin{equation*}
  (\forall k \geq 0)\quad    
				\Delta_{k} \leq \dfrac{ \Delta_{0}}{\left( \zeta \Delta_{0}^{\zeta} \, k + 1\right)^\frac{1}{\zeta}} \leq \left( \dfrac{1}{\zeta k}\right) ^{\frac{1}{\zeta}}. 
			\end{equation*} 	
\end{APPENDIX}




\end{document}